\DeclareMathOperator{\id}{id}
\def \non{{\nonumber}}
\newcommand{\rt}{\rightarrow}
\newcommand{\PSet}{\mathcal{P}}
\newcommand{\Lp}[1]{\mathrm{L}_{#1}}
\newcommand{\metsp}{U}
\newcommand{\g}{\gamma}
\newcommand{\oo}{\infty}
\newcommand{\s}{\sigma}
\def\SC{\mathcal}
\def \triple|{|\! | \! |}
\def\Ball{B}
\def\ort{\mathcal{K}}
\def\f{\frac}
\def\<{\langle}
\def\>{\rangle}
\def\~{\tilde}
\def\N{\mathbb N}
\def\Z{\mathbb Z}
\def\R{\mathbb R}
\journal{}
\begin{document}

\begin{frontmatter}
\title{Uniform moment bounds of multi-dimensional functions of discrete-time stochastic processes}

\author{Arnab Ganguly\corref{t1}\fnref{a1}}
\ead{gangulya@control.ee.ethz.ch}
\author{Debasish Chatterjee\corref{t2}\fnref{a2}}
\ead{chatterjee@sc.iitb.ac.in}
\author{John Lygeros\corref{t2}\fnref{a1}}
\ead{lygeros@control.ee.ethz.ch}
\author{Heinz Koeppl\corref{t1}\fnref{a1}}
\ead{koeppl@ethz.ch}

\cortext[t1]{A.\ Ganguly and H.\ Koeppl were partially supported by the Swiss National Science Foundation, grant PP00P2\_128503/1}
\cortext[t2]{D.\ Chatterjee and J.\ Lygeros were partially supported by the European Commission under the project MoVeS, FP7-ICT-2009-257005, and the HYCON2 Network of Excellence (FP7-ICT-2009-5).}


\address[a1]{Automatic Control Laboratory, ETH Z\"urich\\ Physikstrasse 3, 8092 Z\"urich,\\ Switzerland}
\address[a2]{Systems \& Control Engineering\\IIT-Bombay, Powai\\ Mumbai 400~076\\ India}

	\begin{abstract}
		We establish conditions for uniform $r$-th moment bound of certain $\R^d$-valued functions of  a discrete-time stochastic process taking values in a general metric space. The conditions include an appropriate negative drift  together with a uniform $\Lp p$ bound on the jumps of the process for $p > r + 1$. Applications of the result are given in connection to iterated function systems and biochemical reaction networks.
	\end{abstract}


\begin{keyword}
stochastic stability\sep Markov chains\sep uniform moment bounds\sep invariant distributions\sep stochastic control 
\vspace{.3cm}
\MSC Primary: 60G07 \sep 60J10; secondary: 60J20, 93E15
\end{keyword}

\end{frontmatter}

	\section{Introduction}
                  Stability is an important property in any form of dynamical systems. For deterministic dynamical systems, stability is mainly concerned with different types of behavior of the trajectories of the system which start near the equilibrium point. For the stochastic counterpart, many notions of stability have been developed in the context of Markov chains or more generally Markov processes. Typically, the study of stability of a Markov chain involves checking the existence of invariant measures and investigating  various types of convergence of the transition kernels to the invariant measure. Further investigation involves seeking criteria for ergodicity, Harris recurrence or positive Harris recurrence. While different types of Lyapunov techniques are used for studying stability in the deterministic case,  the corresponding investigation for Markov chains  is carried out by suitable uses of Foster-Lyapunov functions. The essence of the matter is the following: given a process $\{X_n\}_{n\in\Nz}$ taking values in a Polish space $U$, one constructs a non-negative measurable function $V:U\rt [0,\infty)$, called a Foster-Lyapunov function, such that the process $\{V(X_n)\}_{n\in\Nz}$ possesses certain desirable properties, e.g, some kind of Foster-Lyapunov drift condition. The process $\{V(X_n)\}_{n\in\Nz}$, being real-valued and nonnegative, often admits easier analysis and standard results yield various conclusions about recurrence, ergodicity or rate of convergence of measures, etc, for the original process $\{X_n\}_{n\in\Nz}$.  A good reference for various Foster-Lyapunov drift conditions for discrete time  Markov chains  is \citep{ref:MeyTwe-09}. For various results concerning invariant measures of Markov chains, see \citep{ref:Her-LerLas-03} and \citep{ref:Zahar05} for general Markov-Feller operators. For continuous time Markov processes,  \citep{ref:Kha-80} and \citep{ref:Mao97} discuss various techniques for checking stochastic stability.
                  
                  In this paper we consider a different notion of stability, namely, uniform moment bounds for  multi dimensional  functions of discrete time stochastic processes. More precisely, given a stochastic process $\{X_n\}_{n\in\Nz}$ taking values in  a metric space $\metsp$ and a sequence of functions $\{G_n:\metsp \rt \R^d\}$, conditions are sought such that $\sup_n\EE[\|G_n(X_n)\|^r] <\infty$. Uniform moment bounds of stochastic processes or functions of stochastic processes have important applications in several disciplines like queueing theory, control theory, physics, etc. For an $\R$-valued process $\{X_n\}$, Pemantle  and Rosenthal \citep{ref:PemRos-99} established conditions for $\sup_n\EE[(X_n^+)^r]$ to be finite. The conditions involve a ``constant'' negative drift  together with a uniform $\Lp p$ bound on the jumps of the process for $p > r + 1$. The fact that the result does not require existence of Lyapunov functions makes it particularly useful, as explicit  construction of suitable Lyapunov functions is often a difficult task \citep{ref:BerFriGol-03}. For a Markov chain $\{X_n\}$ taking values in a general metric space, \citep{ref:ChaPal-11} used the theory of excursions of Markov processes to establish a uniform $\Lp 1$ bound on an $\R$-valued function of $X_n$. Their hypotheses require the existence of  a certain derived supermartingale with a prescribed rate of decay  when the process stays outside a compact set. While this approach does not work directly with drift conditions as in the Foster-Lyapunov function approach, the existence of the desired supermartingale is in general not straightforward to verify.  
                  
                  Our paper generalizes the one-dimensional result of \citep{ref:PemRos-99} in two directions: first, we consider $\R^d$-valued functions of the stochastic process $\{X_n\}$ taking values in a general metric space;  second, the drift condition is generalized to incorporate a number of scenarios. More precisely, our main theorem reads as follows:
                  \begin{theorem}
		\label{mainth}
		\label{t:main}
			Let $(\Omega, \sigalg, \{\sigalg_n\}, \PP)$ be a filtered probability space, $\metsp$ a complete and separable metric space and $C\subset \metsp$ (Borel) measurable.  Let $\{G_n:\metsp \rt \R^d\}$ and $\{H_n:\metsp \rt \R^d\}$ be  sequences of measurable functions satisfying
			\begin{enumerate}[label={\rm (\roman*)}, leftmargin=*, align=right]
			\item \label{condg1}for every $n$, $G_n, H_n : \metsp\setmin C \rt \R^d_+$;
			\item \label{condg2}for every $n$, $G_n^{-1}G_n( \metsp\setmin C) = \metsp\setmin C$;
			\item \label{condg3} there exist constants $a,b>0$, such that $a\leq\inf_n\inf_{x\notin C}\|H_n(x)\|, \\ \sup_n\sup_{x\notin C}\|H_n(x)\|\leq b$ and $\sup_n\sup_{x\in C}\|G_n(x)\| \leq b$.
			\end{enumerate}
			 Let $\{X_n\}$ be a sequence of $\{\SC{F}_n\}$-adapted  $\metsp$-valued random variables. Assume that $X_0\in C$ and the following two conditions hold:
			\begin{enumerate}[label={\rm (\roman*)}, leftmargin=*, align=right,start=4]
						\item for all $n\geq0$,
					\begin{equation}
					\label{cond1}
						\EE\bigl[G_{n+1}(X_{n+1})-G_n(X_n)\,\big|\,\SC{F}_n\bigr] \leq - H_n(X_n) \quad \text{on}\quad \{X_n \notin C\};
					\end{equation}
				\item \label{2condition} there exist constants $L > 0$ and $p>2$ such that for all $n\geq0$
					\begin{align}
					\label{cond2}
						\EE\bigl[\|G_{n+1}(X_{n+1})-G_n(X_n)\|^p\,\big|\,\SC{F}_n\bigr] \leq L.
					\end{align}
			\end{enumerate}
			Then for any $0<r<p-1$, there exists a constant $\eta \Let \eta(p,a,b,d,L,r)$ such that 
			\[
				\sup_{n\in\Nz} \EE\bigl[\|G_n(X_n)\|^r \bigr] \leq \eta.
			\]
			If in \ref{2condition}, instead of \eqref{cond2} we have 
			\begin{align}
					\label{cond2_ii}
						\EE\bigl[\|G_{n+1}(X_{n+1})-G_n(X_n)\|^p\bigr] \leq L, 
					\end{align}
		        then  for any $0<r<p/2-1$, there exists a constant $\eta \Let \eta(p,a,b,d,L,r)$ such that 
			\[
				\sup_{n\in\Nz} \EE\bigl[\|G_n(X_n)\|^r\bigr] \leq \eta.
			\]

		\end{theorem}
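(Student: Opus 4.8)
\emph{Reduction to a scalar process.} The idea is to collapse the $\R^d$-valued problem onto the scalar negative-drift setting by projecting $G_n(X_n)$ onto the all-ones direction, and then to run an excursion argument of the type used in \citep{ref:PemRos-99}. Set $\mathbf 1=(1,\dots,1)\in\R^d$ and $Y_n\Let\langle\mathbf 1,G_n(X_n)\rangle$, the sum of the coordinates of $G_n(X_n)$. For $v\in\R^d$ one has $|\langle\mathbf 1,v\rangle|\le\sqrt d\,\|v\|$, and for $v\in\R^d_+$ also $\|v\|\le\langle\mathbf 1,v\rangle$. Combined with \ref{condg1} and \ref{condg3} this gives: on $\{X_n\in C\}$, $|Y_n|\le\sqrt d\,b$, so $\{Y_n>\sqrt d\,b\}\subseteq\{X_n\notin C\}$; and on $\{X_n\notin C\}$, $0\le\|G_n(X_n)\|\le Y_n$. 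Hence $\|G_n(X_n)\|^r\le b^r+(Y_n^+)^r$ for every $n$, and it suffices to bound $\sup_n\EE[(Y_n^+)^r]$. Since $\langle\mathbf 1,\cdot\rangle$ is linear it commutes with conditional expectation and preserves the coordinatewise inequality \eqref{cond1}; using in addition $H_n(X_n)\in\R^d_+$ and $\|H_n(X_n)\|\ge a$ off $C$, we obtain, on $\{X_n\notin C\}\supseteq\{Y_n>\sqrt d\,b\}$,
\[
\EE\bigl[Y_{n+1}-Y_n\mid\SC F_n\bigr]\;\le\;-\langle\mathbf 1,H_n(X_n)\rangle\;\le\;-\|H_n(X_n)\|\;\le\;-a.
\]
Moreover $|Y_{n+1}-Y_n|\le\sqrt d\,\|G_{n+1}(X_{n+1})-G_n(X_n)\|$, so \eqref{cond2} (resp.\ \eqref{cond2_ii}) gives $\EE\bigl[|Y_{n+1}-Y_n|^p\mid\SC F_n\bigr]\le d^{p/2}L$ (resp.\ $\EE\bigl[|Y_{n+1}-Y_n|^p\bigr]\le d^{p/2}L$), and $|Y_0|\le\sqrt d\,b$ since $X_0\in C$ (an easy induction with these bounds also gives integrability of each $G_n(X_n)$). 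Condition \ref{condg2} guarantees that $\{X_n\notin C\}$ is recognizable from the value $G_n(X_n)$ alone, which is what keeps the excursions of $Y_n$ aligned with those of $X_n$. We are thus reduced to the scalar assertion---in essence the one-dimensional result of \citep{ref:PemRos-99}---that a real, $\{\SC F_n\}$-adapted $\{Y_n\}$ with $|Y_0|\le B$, with $\EE[Y_{n+1}-Y_n\mid\SC F_n]\le-a$ on $\{Y_n>B\}$, and with the stated $\Lp p$ bound on its increments, satisfies $\sup_n\EE[(Y_n^+)^r]<\infty$ for $r<p-1$ under \eqref{cond2}, and for $r<p/2-1$ under \eqref{cond2_ii}.

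\emph{Excursion decomposition.} Fix $n$ with $Y_n>B$ (otherwise nothing to prove) and put $\sigma\Let\max\{k\le n:Y_k\le B\}$, which exists since $Y_0\le B$. Then $Y_{\sigma+1},\dots,Y_n>B$, so the one-step drift is $\le-a$ at the times $\sigma+1,\dots,n-1$. With $D_k\Let Y_{k+1}-Y_k$, $\widetilde D_k\Let D_k-\EE[D_k\mid\SC F_k]$, and $m\Let n-j$, telescoping from $\sigma$ to $n$ and using $Y_\sigma\le B$ shows that on $\{\sigma=j,\;Y_n>B+\lambda\}$,
\[
D_j+\sum_{k=j+1}^{n-1}\widetilde D_k\;>\;\lambda+a(m-1).
\]
Under \eqref{cond2} one may absorb $D_j$ as well, since $|\EE[D_j\mid\SC F_j]|\le\sqrt d\,L^{1/p}$ a.s., so for $\lambda\ge2\sqrt d\,L^{1/p}$ the event forces $\sum_{k=j}^{n-1}\widetilde D_k>\tfrac12\lambda+a(m-1)$. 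A union bound over $j\in\{0,\dots,n-1\}$ reduces everything to upper tail estimates for sums of martingale differences with $\EE|\widetilde D_k|^p\le c_{p,d}L$ (and, under \eqref{cond2}, the corresponding $\SC F_k$-conditional bounds), and, under \eqref{cond2_ii}, also for single increments with $\EE|D_k|^p\le d^{p/2}L$.

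\emph{Tail estimates and integration.} Under \eqref{cond2_ii}, the Burkholder--Davis--Gundy inequality and the power-mean inequality give $\EE\bigl|\sum_{k=j}^{n-1}\widetilde D_k\bigr|^p\le C_{p,d}\,m^{p/2}L$; with Markov's inequality, the elementary bound $\sum_{m\ge1}m^{p/2}(\lambda+am)^{-p}\le C_{p,a}\,\lambda^{1-p/2}$, and the smaller single-increment contribution $\sum_{m\ge1}(\lambda+am)^{-p}\le C_{p,a}\,\lambda^{1-p}$, we get $\PP(Y_n>B+\lambda)\le C_{p,a,d,L}\,\lambda^{1-p/2}$ uniformly in $n$; then $\EE[(Y_n^+)^r]=\int_0^\infty r\lambda^{r-1}\PP(Y_n^+>\lambda)\,d\lambda$ is finite and uniformly bounded for $r<p/2-1$. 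Under the conditional hypothesis \eqref{cond2} the naive union bound over excursion lengths is too wasteful (it yields only the exponent $p/2-1$, and even a sharp per-excursion estimate only yields $p-2$); one argues instead that the excursion reaches height $\lambda$ essentially through a single large increment, bounds the probability of a jump of size $t$ by the conditional Markov estimate $d^{p/2}L\,t^{-p}$ and sums over the \emph{time} of that jump only, and shows the complementary ``no large jump'' event is super-polynomially small via a Bernstein-type (Freedman) inequality for suitably truncated increments, using $\EE[D_k^2\mid\SC F_k]\le d\,L^{2/p}$. This gives $\PP(Y_n>B+\lambda)\le C_{p,a,d,L}\,\lambda^{1-p}$ uniformly in $n$, and integrating yields $\sup_n\EE[(Y_n^+)^r]<\infty$ for $r<p-1$. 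In both cases the final constant $\eta$ depends only on $p,a,b,d,L,r$.

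\emph{Main difficulty.} The reduction and the case \eqref{cond2_ii} are essentially bookkeeping; the crux is the sharp tail $\PP(Y_n>B+\lambda)\le C\lambda^{1-p}$ behind $r<p-1$. One must avoid summing over the length of the current excursion and instead isolate the one dominant jump using the conditional $\Lp p$ control, show the complementary event decays faster than every power, and keep all constants independent of $n$---this is exactly the mechanism of \citep{ref:PemRos-99}, here carried through with the coordinatewise drift \eqref{cond1}, a general measurable set $C$ in place of a sublevel set, and the $\sqrt d$ losses of the projection.
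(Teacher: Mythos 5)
Your reduction to a scalar process is correct and is genuinely different from the paper's route. The paper keeps the $d$-dimensional structure throughout: it decomposes by the last exit time from $C$, builds an $\R^d$-valued supermartingale $Z^{(k)}_n=G_{k+n}(X_{k+n})+\gamma^{(k)}_n$ on each excursion, applies a componentwise Doob decomposition and a vector Burkholder inequality, and sums the per-excursion bounds $\theta (N-k)^{-(p-r)}$ into $\theta\zeta(p-r)$. Your observation that $\langle\mathbf 1,\cdot\rangle$ preserves the componentwise drift inequality, dominates the Euclidean norm on $\R^d_+$, and costs only a factor $\sqrt d$ on the increments collapses the whole theorem onto the one-dimensional statement of \citep{ref:PemRos-99} (and, incidentally, makes hypothesis \ref{condg2} unnecessary for this argument, since $Y_n>\sqrt d\,b$ already forces $X_n\notin C$ by \ref{condg3}). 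The unconditional case $r<p/2-1$ is adequately handled by your Burkholder-plus-union-bound computation.

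The gap is in the conditional case $r<p-1$, which is the main assertion and which you only sketch. Two concrete problems. First, the mechanism you name for the ``no large jump'' event --- a Freedman/Bernstein bound for increments truncated at a fixed fraction of the target level --- does not give super-polynomial decay: with differences bounded by $R=\delta(\lambda+am)$ and deviation $x$ of order $\lambda+am$, the bound $\exp\bigl(-x^2/(2V+2Rx/3)\bigr)$ degenerates to the constant $\exp(-c/\delta)$ as soon as $Rx$ dominates the variance term. It can be repaired by letting the truncation level shrink logarithmically in $\lambda+am$, but then the single-jump term picks up $(\log\lambda)^p$ factors, so the clean tail $\PP(Y_n>B+\lambda)\le C\lambda^{1-p}$ you assert is not what the argument produces (the integrated conclusion for $r<p-1$ survives, but the step as written fails). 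Second, the decisive bookkeeping --- that the big-jump union must be indexed by the absolute time $k$ of the jump with a threshold proportional to $\lambda+a(n-k)$, so that the sum telescopes to order $\lambda^{1-p}$ instead of the $\lambda^{2-p}$ obtained by summing $m(\lambda+am)^{-p}$ over excursion lengths --- is asserted but never verified, and verifying it requires controlling the martingale fluctuation both before and after the putative big jump on each event $\{\sigma=j\}$.

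For comparison, the paper (following Lemmas \ref{bdS} and \ref{bdS2} in the Appendix) reaches the exponent $p$ per excursion by a device that needs no exponential concentration at all: for an excursion of length $n$ it compares the first passage time $S$ of the martingale to level $n/3$ with the first time $T$ of a single jump of size $n/3$. On $\{S\le T\}$ it multiplies $\PP(S\le n)\le Cn^{-p/2}$ (from the unconditional bound $\EE[\|M_n\|^p]\le Cn^{p/2}$) by the conditional probability, at most $Cn^{-p/2}$, of the remaining climb from $\|M_S\|\le 2n/3$; on $\{T<S\}$ it multiplies the conditional big-jump probability $Cn^{-p}$ by $\PP(\tau>k)\le Ck^{-p/2}$ and sums over $k$. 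You should either import this two-factor argument into your scalar setting, or simply cite Theorem~1 of \citep{ref:PemRos-99}, on which your reduction lands exactly.
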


		Here, $\N \Let \{1, 2, \ldots \}$,  $\Nz \Let \{0\}\cup\N$, $\R^d_+\equiv \{x\in \R^d:x_i \geq 0,i=1,\hdots,d\}$ and $\left\|\cdot\right\|$ denotes the Euclidean norm on $\R^d$ . Typically in many applications, $G_n\equiv G$ is a continuous function and $C\subset \metsp$ is compact. Therefore the condition $\sup_n\sup_{x\in C}\|G_n(x)\| < b$ automatically holds. In fact, Theorem \ref{t:main} also holds if the condition  $\sup_n\sup_{x\in C}\|G_n(x)\| < b$ is replaced by the condition $\sup_n \EE[\|G_n(X_n)\|^p1_{\{X_n \in C\}}] < \infty$. 
		
		Next, note that, since $G_n^{-1}G_n( \metsp\setmin C) \supset \metsp\setmin C$, \ref{condg2} is equivalent to requiring $G_n^{-1}G_n( \metsp\setmin C) \subset \metsp\setmin C$. A necessary and sufficient condition for \ref{condg2} is $G_n( \metsp\setmin C)\cap G_n(C) =\emptyset$.
		
		The inequalities between the various vectors in Theorem \ref{t:main} are interpreted component-wise, i.e., for $x,y \in \R^d$, we have $x\leq y$ if $x_i \leq y_i$ for all $i=1,\ldots,d$. 
 One salient point to note in Theorem \ref{t:main} is that no Markovian assumption on the process $\{X_n\}$ is made. The component-wise inequality used in \eqref{cond1} is the natural partial order in the first orthant $\R^d_+$. However, $\R^d_+$ plays no special role in the proof of Theorem \ref{t:main} and the  result for a general orthant is stated in Theorem \ref{t:main_orth} with the partial order of $\R^d_+$ replaced by an appropriate partial order of the orthant considered. 
 
            Our result can be particularly helpful in queueing theory, control theory where a uniform bound on the variance of the states of a multi-dimensional stochastic system is desirable. Section \secref{s:app3} outlines a method for obtaining uniform moment bounds of multi-dimensional iterated function systems, an important area in the field of control theory. Section \secref{s:app4} concerns applications in connection to general biochemical reaction networks.
            
            Finally, we wish to mention that for Markov processes uniform moment bounds often imply existence of an invariant probability measure. More generally,  as the discussion after the proof of Theorem \ref{t:main_orth} shows that a uniform moment bound of an appropriate function of the Markov process leads to the existence of an invariant probability measure. Thus, the central theme of our paper is very much related to the traditional notion of stochastic stability.

	\section{Proof of Theorem \ref{t:main}}
	\label{s:mainres}
All the analysis hereafter assumes the existence of a probability space defined in the hypotheses of Theorem \ref{t:main}.

We start with analogue of \citep[Lemma 7]{ref:PemRos-99}. The proof is just a simple application of the following version of Burkholder's inequality \cite[\S6.3]{ref:Str-11} and follows exactly the same steps as in \citep[Lemma 7]{ref:PemRos-99}. 
For an $\{\SC{F}_t\}$-martingale $\{M_t\}$ taking values in $\R^d$, let $\{[M]_t\}$ denote its scalar quadratic variation process (see \citep[Chap 2]{ref:EthKur-86}).
\begin{lemma}
\label{burk}
Let $\{M_t\}$ be an $\{\SC{F}_t\}$-martingale taking values in $\R^d$. Then for $1\leq p<\infty$, there exists a constant $c_p>0$ such that
$$\EE[\|M_t-M_s\|^p|\SC{F}_s] \leq c_p\EE[([M]_t -[M]_s)^{p/2}|\SC{F}_s], \ \ 0\leq s<t.$$
\end{lemma}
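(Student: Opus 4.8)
The plan is to derive the stated bound from the classical scalar Burkholder--Davis--Gundy (BDG) inequality, pushing it first to $\R^d$-valued martingales and keeping the conditioning on $\SC F_s$ throughout. Write $M=(M^1,\dots,M^d)$, so that each coordinate $\{M^i_t\}$ is a real-valued $\{\SC F_t\}$-martingale and, by definition of the scalar quadratic variation, $[M]_t=\sum_{i=1}^d [M^i]_t$. The input I would take as known is the conditional scalar BDG inequality (itself a standard consequence of the unconditional one, e.g.\ \citep[\S6.3]{ref:Str-11}): for $1\le p<\infty$ there is $c_p>0$ with $\EE\bigl[|M^i_t-M^i_s|^p\mid\SC F_s\bigr]\le c_p\,\EE\bigl[([M^i]_t-[M^i]_s)^{p/2}\mid\SC F_s\bigr]$ for every coordinate $i$ and all $0\le s<t$.

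The passage from coordinates to the Euclidean norm is then purely algebraic. By equivalence of the $\ell^2$ and $\ell^p$ norms on $\R^d$ there is a constant depending only on $p$ and $d$ with
\[
\|M_t-M_s\|^p=\Bigl(\sum_{i=1}^d |M^i_t-M^i_s|^2\Bigr)^{p/2}\le c_{p,d}\sum_{i=1}^d |M^i_t-M^i_s|^p
\]
pointwise; taking $\EE[\,\cdot\mid\SC F_s]$, using its linearity and monotonicity, and applying the coordinatewise BDG inequality bounds the right-hand side by $c_{p,d}\,c_p\sum_i \EE[([M^i]_t-[M^i]_s)^{p/2}\mid\SC F_s]$. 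A second application of the same norm comparison, now to the nonnegative vector $\bigl(\sqrt{[M^i]_t-[M^i]_s}\bigr)_i$, gives $\sum_i ([M^i]_t-[M^i]_s)^{p/2}\le c'_{p,d}\,([M]_t-[M]_s)^{p/2}$ pointwise, and combining the two yields the assertion with a constant of the form $c_p(d)$.

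If one insists on the dimension-free constant suggested by the notation $c_p$, the coordinatewise route must be replaced by the genuine Hilbert-space-valued BDG inequality, in which $\|M_t-M_s\|$ is treated directly; the cleanest way to obtain the conditional form is then to apply the unconditional Hilbert-space BDG to the shifted martingale $\tilde M_u:=M_{s+u}-M_s$ (whose scalar quadratic variation is $[M]_{s+u}-[M]_s$) under a regular conditional probability given $\SC F_s$, which exists because $\metsp$ is complete and separable. I expect the \emph{main obstacle} to lie exactly here: controlling the Euclidean norm, rather than the individual coordinates, without letting the constant depend on $d$, since the $p/2$-power does not distribute over the sum $\sum_i|M^i_t-M^i_s|^2$. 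For the use made of this lemma in Theorem \ref{t:main}, however, a $d$-dependent constant is harmless, so the elementary coordinatewise argument already suffices.
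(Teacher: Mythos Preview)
Your argument is correct, but there is nothing to compare it against: the paper does not prove Lemma~\ref{burk} at all. It simply records the inequality as a known version of Burkholder's inequality, citing \cite[\S6.3]{ref:Str-11}; the sentence ``The proof is just a simple application of the following version of Burkholder's inequality \ldots'' refers to the \emph{next} lemma (Lemma~\ref{burkapp}), which is the analogue of \cite[Lemma~7]{ref:PemRos-99} and is proved by invoking Lemma~\ref{burk}.

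Your coordinatewise route via the scalar conditional BDG inequality together with the $\ell^2$--$\ell^p$ norm comparison is a perfectly valid justification; it yields a constant of the form $c_p(d)$ rather than a dimension-free $c_p$, but, as you already note, every subsequent use in the paper (Lemma~\ref{burkapp}, Lemma~\ref{martbd}, Lemma~\ref{martbd0}, and ultimately the bound $\eta(p,a,b,d,L,r)$ in Theorem~\ref{t:main}) allows the constant to depend on $d$, so nothing is lost. The alternative you sketch---applying the Hilbert-space BDG inequality to the shifted martingale under a regular conditional probability---is the standard way to recover the dimension-free constant, and is essentially what is contained in the reference the paper cites. Either way, for the purposes of this paper your elementary argument is more than sufficient.
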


\begin{lemma}
\label{burkapp}
Let $\{M_n\} \subset \R^d$ be an $\{\SC{F}_n\}$-martingale. Assume that for some  $p>2$, there exists a sequence of constants $\nu_n$ such that
$$\EE[\|M_{n+1}-M_{n}\|^p|\SC{F}_n] \leq \nu_n\quad  \text{for all } n \geq 0.$$
Then there exists a constant $c_p$ such that $\EE[\|M_n -M_k\|^p|\SC{F}_k] \leq  c_p (n-k)^{p/2-1}\sum_{j>k}^n \nu_j.$
\end{lemma}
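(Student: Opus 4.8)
The plan is to turn the qualitative Burkholder inequality of Lemma~\ref{burk} into the stated quantitative bound by inserting the elementary power-mean inequality. First I would apply Lemma~\ref{burk} with $s=k$ and $t=n$, which gives
\[
\EE\bigl[\|M_n-M_k\|^p\,\big|\,\SC{F}_k\bigr]\le c_p\,\EE\bigl[([M]_n-[M]_k)^{p/2}\,\big|\,\SC{F}_k\bigr],
\]
where $c_p$ is exactly the constant furnished by that lemma; no new constant will be introduced. Then I would use the fact that for a discrete-time $\R^d$-valued martingale the scalar quadratic variation has increments $[M]_n-[M]_k=\sum_{j=k}^{n-1}\|M_{j+1}-M_j\|^2$, a sum of $n-k$ nonnegative terms.

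Next, since $p>2$ the function $x\mapsto x^{p/2}$ is convex on $[0,\infty)$, so Jensen's inequality applied to the uniform average of those $n-k$ terms yields
\[
\bigl([M]_n-[M]_k\bigr)^{p/2}\le (n-k)^{p/2-1}\sum_{j=k}^{n-1}\|M_{j+1}-M_j\|^p
\]
(the case $n=k$ being trivial, as both sides of the asserted inequality then vanish because $p/2-1>0$). Taking $\EE[\,\cdot\,|\,\SC{F}_k]$ and using linearity reduces matters to estimating $\EE[\|M_{j+1}-M_j\|^p\,|\,\SC{F}_k]$ for each $j$ in the range $k\le j\le n-1$.

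For this last step I would invoke the tower property together with the hypothesis: since $\nu_j$ is a deterministic constant,
\[
\EE\bigl[\|M_{j+1}-M_j\|^p\,\big|\,\SC{F}_k\bigr]=\EE\Bigl[\EE\bigl[\|M_{j+1}-M_j\|^p\,\big|\,\SC{F}_j\bigr]\,\Big|\,\SC{F}_k\Bigr]\le\EE[\nu_j\,|\,\SC{F}_k]=\nu_j .
\]
In particular $\EE\|M_{j+1}-M_j\|^p\le\nu_j<\infty$, so all the conditional expectations above are well defined. Summing over $j$ and chaining the three displays gives $\EE[\|M_n-M_k\|^p\,|\,\SC{F}_k]\le c_p\,(n-k)^{p/2-1}\sum_{j=k}^{n-1}\nu_j$, which is the claimed bound (the sum running over the $n-k$ jumps occurring between times $k$ and $n$, up to the indexing convention of the statement).

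I do not expect a genuine obstacle here: this is a direct, routine quantitative strengthening of Burkholder's inequality, mirroring \citep[Lemma 7]{ref:PemRos-99}. The only places that need a word of care are the identification of the discrete-time quadratic-variation increment and the direction in which Jensen's inequality is applied — it is precisely the hypothesis $p>2$ (equivalently $p/2\ge 1$) that makes $x\mapsto x^{p/2}$ convex and thus lets the factor $(n-k)^{p/2-1}$ be pulled out.
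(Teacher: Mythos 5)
Your proposal is correct and follows essentially the same route as the paper: Burkholder's inequality from Lemma~\ref{burk}, identification of the discrete quadratic-variation increment as the sum of squared jumps, the power-mean bound $(\sum_{i}a_i)^{p/2}\le m^{p/2-1}\sum_i a_i^{p/2}$ (which the paper attributes to H\"older and you derive via Jensen --- the same elementary inequality), and finally the tower property to replace each conditional $p$-th moment by $\nu_j$. The only difference is cosmetic indexing of the jumps, which you already flag.
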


\begin{proof}
Notice that by Burkholder's inequality there exists a constant $c_p$ such that
\[\EE[\|M_n -M_k\|^p|\SC{F}_k]  \leq c_p\EE[(\sum_{j>k}^n\|M_j - M_{j-1}\|^2)^{p/2}|\SC{F}_k]\]
By Holder's inequality,
\[\|\sum_{i=1}^na_i\|^{p/2}\leq n^{p/2-1}\sum_{i=1}^na_i^{p/2}.\]
Taking $a_j = \|M_j-M_{j-1}\|^2$, it follows that
\begin{align*}
\EE[\|M_n -M_k\|^p|\SC{F}_k]&  \leq c_p (n-k)^{p/2-1}\sum_{j>k}^n\EE[\|M_j - M_{j-1}\|^p|\SC{F}_k]\\
& \leq c_p (n-k)^{p/2-1}\sum_{j>k}^n \nu_j.
\end{align*}

\end{proof}

\begin{remark}\label{burkapp2}
If in Lemma \ref{burkapp} we have the  weaker hypothesis:
$$\EE[\|M_{n+1}-M_{n}\|^p] \leq \nu_n\quad  \text{for all } n \geq 0,$$
then $\EE[\|M_n -M_k\|^p] \leq  c_p (n-k)^{p/2-1}\sum_{j>k}^n \nu_j.$
\end{remark}

\begin{lemma}
\label{martbd}
Let $\{M_n\} \subset \R^d$ be an $\{\SC{F}_n\}$-martingale with $\EE[\|M_0\|^p] < \infty$ and assume that for some  $p\geq 2$, there exists a constant $\nu$ such that
$$\EE[\|M_{n+1}-M_{n}\|^p] \leq \nu\quad \text{for all } n \geq 0.$$
Then for $0<r<p$, there exists a constant $\theta\Let \theta(\EE[\|M_0\|^p],\nu,p,r)$ such that
$$ \EE\bigl[\|M_n\|^r\indic{\{\|M_{n}\| \geq n\}}\bigr] \leq \theta/ n^{p/2-r}.$$
\end{lemma}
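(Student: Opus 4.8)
The plan is a short interpolation argument: bound the $p$-th moment of $M_n$ by something of order $n^{p/2}$, use Markov's inequality to bound $\PP(\|M_n\| \geq n)$ by a constant times $n^{-p/2}$, and combine the two via Holder's inequality so that the powers of $n$ balance to give exactly $n^{r-p/2}$.

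First I would establish the moment growth bound. Applying Remark \ref{burkapp2} with $k=0$ and $\nu_j \equiv \nu$ yields $\EE[\|M_n - M_0\|^p] \leq c_p\, n^{p/2-1}\sum_{j=1}^n \nu = c_p\nu\, n^{p/2}$ (when $p=2$ this is simply orthogonality of the martingale increments). Then, using the elementary inequality $\|x+y\|^p \leq 2^{p-1}(\|x\|^p + \|y\|^p)$ together with $n \geq 1$ and $p\geq 2$,
\[
\EE\bigl[\|M_n\|^p\bigr] \leq 2^{p-1}\bigl(c_p\nu\, n^{p/2} + \EE[\|M_0\|^p]\bigr) \leq C_1\, n^{p/2},
\]
for a constant $C_1 = C_1(\EE[\|M_0\|^p],\nu,p)$ independent of $n$; finiteness of $\EE[\|M_0\|^p]$ is used here. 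Markov's inequality then gives $\PP(\|M_n\|\geq n) \leq n^{-p}\,\EE[\|M_n\|^p] \leq C_1\, n^{-p/2}$.

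Finally, since $0 < r < p$, Holder's inequality with conjugate exponents $p/r$ and $p/(p-r)$, applied to the product $\|M_n\|^r \cdot 1_{\{\|M_n\|\geq n\}}$, gives
\[
\EE\bigl[\|M_n\|^r 1_{\{\|M_n\|\geq n\}}\bigr] \leq \bigl(\EE[\|M_n\|^p]\bigr)^{r/p}\,\bigl(\PP(\|M_n\|\geq n)\bigr)^{(p-r)/p} \leq \bigl(C_1 n^{p/2}\bigr)^{r/p}\bigl(C_1 n^{-p/2}\bigr)^{(p-r)/p},
\]
and the exponents of $n$ add up to $\frac{r}{2} - \frac{p-r}{2} = r - \frac{p}{2}$, so the right-hand side equals $C_1\, n^{r-p/2} = C_1/n^{p/2-r}$. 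Taking $\theta = C_1$ finishes the proof. I do not anticipate a genuine obstacle: the argument is routine interpolation, and the only point needing care is that $\theta$ must not depend on $n$, which is exactly why the $n^{p/2}$ growth in the moment estimate has to be matched precisely against the $n^{-p/2}$ decay coming from Markov's inequality.
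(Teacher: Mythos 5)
Your argument is correct, and its first half coincides with the paper's: both derive the growth estimate $\EE[\|M_n\|^p]\le C_1 n^{p/2}$ from Remark~\ref{burkapp2} (this is \eqref{EMnbound} in the text), and this is the only place where the martingale structure and the increment bound enter. Where you diverge is in the truncation step. The paper writes $\EE[\|M_n\|^r\indic{\{\|M_n\|\ge n\}}]$ via the layer-cake identity $n^r\PP(\|M_n\|\ge n)+\int_n^\infty r y^{r-1}\PP(\|M_n\|>y)\,\drv y$ and applies Markov's inequality at each level $y$, which after integrating (using $r<p$) gives $\EE[\|M_n\|^p]\cdot O(n^{r-p})\le \theta n^{r-p/2}$. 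You instead apply H\"older with exponents $p/r$ and $p/(p-r)$ to the product $\|M_n\|^r\cdot\indic{\{\|M_n\|\ge n\}}$, obtaining $(\EE[\|M_n\|^p])^{r/p}\PP(\|M_n\|\ge n)^{(p-r)/p}$, and your exponent bookkeeping is right: $n^{r/2}\cdot n^{-(p-r)/2}=n^{r-p/2}$, with the constants combining to $C_1^{r/p+(p-r)/p}=C_1$. The two routes are equivalent in strength here and both need $r<p$ (you for the conjugate exponents, the paper for convergence of the tail integral); yours is marginally slicker, while the paper's layer-cake computation is the one reused verbatim in Lemmas~\ref{bdS} and~\ref{bdS2}, where conditional versions are needed and the interpolation form is less convenient. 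Your side remark that the $p=2$ case can be handled by orthogonality of increments is a sensible patch, since Lemma~\ref{burkapp} is stated for $p>2$ while Lemma~\ref{martbd} allows $p\ge 2$.
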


\begin{proof}
First notice that by  Remark \ref{burkapp2} with $\nu_n\equiv \nu$, we have $\EE[\|M_n -M_0\|^p] \leq c_p\nu n^{p/2}$. Hence,
\begin{align}\label{EMnbound}
\EE[\|M_n\|^p] \leq 2^p(\EE[\|M_n - M_0\|^p] + \EE[\|M_0\|^p]) \leq \theta_0n^{p/2}
\end{align}
where $\theta_0$ is a constant depending on $\EE[\|M_0\|^p],\nu$ and $p$.
Next, notice that 
\begin{align*}
\EE[\norm{M_n}^r\indic{\{\norm{M_n} \geq n\}}]& = n^r \PP(\norm{M_n} \geq n) + \int_{n}^\oo ry^{r-1}\PP(\norm{M_n} > y) \,\drv y\\
& \leq n^{r-p}\EE[\norm{M_n}^p] +\int_{n}^\oo ry^{r-1-p}\EE[\norm{M_n}^p] \,\drv y \\
& \leq \EE[\norm{M_n}^p)\left(n^{r-p} +\int_n^\oo ry^{r-1-p} \,\drv y\right)\\
& = \EE[\norm{M_n}^p]\left( n^{r-p} +\frac{r}{p-r}n^{r-p}\right)\qquad\text{since } r<p,\\
& \leq \theta n^{r-p}n^{p/2}\quad \text{for some $\theta > 0$},
\end{align*} 
where for the last inequality the bound for $\EE[\norm{M_n}^p]$ from  \eqref{EMnbound} is used.
\end{proof}

\begin{lemma}
\label{martbd0}
Let $\{M_n\} \subset \R^d$ be an $\{\SC{F}_n\}$-martingale with $\EE[\|M_0\|^p] < \infty$ and assume that for some  $p>2$, there exists a constant $\nu$ such that
$$\EE[\|M_{n+1}-M_{n}\|^p|\SC{F}_n] \leq \nu \quad \text{for all } n \geq 0.$$
Let $\tau=\inf\{n>0:\|M_n\| <  n\}$. 
Then for $0<r<p$, there exists a constant $\theta\Let\theta(\EE[\|M_0\|^p],\nu,p,r)$ such that
$$ \EE\bigl[\|M_n\|^r\indic{\{\tau> n\}}\bigr] \leq \theta/n^{p-r}.$$
\end{lemma}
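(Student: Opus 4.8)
The mechanism behind the bound is that, on $\{\tau>n\}$, the martingale is forced to stay above the linear barrier $k\mapsto k$ throughout $\{1,\dots,n\}$, so in particular $\|M_k\|$ must reach the level $n/2$ by time $\lceil n/2\rceil$. I would therefore decompose $\{\tau>n\}$ according to the first passage time $\sigma:=\inf\{k\ge 1:\|M_k\|\ge n/2\}$ (which satisfies $\sigma\le\lceil n/2\rceil$ on $\{\tau>n\}$), together with the rare event $\{\|M_0\|\ge n/2\}$, and bound each resulting piece by a constant multiple of $n^{r-p}$. The crossing at $\sigma$ must be produced either by $M_0$ already being large, or by a large jump $\|M_\sigma-M_{\sigma-1}\|$, or by a large post-crossing displacement $\|M_n-M_\sigma\|$; these three cases organize the proof.

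\textbf{Tools.} First, exactly as in the proof of Lemma~\ref{martbd} (Remark~\ref{burkapp2} with $\nu_n\equiv\nu$) one has $\EE[\|M_n\|^p]\le\theta_0 n^{p/2}$, whence $\PP(\tau>k)\le\PP(\|M_k\|\ge k)\le\theta_0 k^{-p/2}$; since $p>2$ this makes $S:=\sum_{k\ge0}\PP(\tau>k)$ finite, and it is $S$ that absorbs the summation over the possible values of $\sigma$. Second, applying Lemma~\ref{burkapp} to the shifted martingale $\{M_{\sigma+j}-M_\sigma\}_{j\ge0}$ (whose increments still satisfy the hypothesis, since $\EE[\|M_{m+1}-M_m\|^p\mid\SC{F}_m]\le\nu$ is preserved under a shift by a stopping time) gives $\EE[\|M_n-M_\sigma\|^p\mid\SC{F}_\sigma]\le c_p\nu(n-\sigma)^{p/2}\le c_p\nu n^{p/2}$ on $\{\sigma\le n\}$. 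I also use Doob's maximal inequality for the submartingale $\{\|M_k\|^p\}$, giving $\PP(\sigma\le\lceil n/2\rceil)\le\EE[\|M_{\lceil n/2\rceil}\|^p](n/2)^{-p}\le C n^{-p/2}$, and the elementary truncation bound $\EE[X^r\indic{\{X>\lambda\}}]\le\frac{p}{p-r}\EE[X^p]\lambda^{r-p}$ already used inside Lemma~\ref{martbd}, applied both unconditionally and conditionally.

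\textbf{The main estimates.} The exceptional term $\EE[\|M_n\|^r\indic{\{\|M_0\|\ge n/2\}}]$ is handled via $\|M_n\|\le\|M_0\|+\|M_n-M_0\|$, the truncation bound on $\|M_0\|$ (this is where $\EE[\|M_0\|^p]<\infty$ enters $\theta$), and the conditional Burkholder bound for $M_n-M_0$; it is $\le C n^{r-p}$. On $\{\tau>n\}\cap\{\sigma=s\}$ one has $\|M_{s-1}\|<n/2$, $\|M_n\|\ge n$ and the barrier constraint up to time $s-1$, i.e.\ $\{\tau>s-1\}$, so from $\|M_n\|\le\|M_{s-1}\|+\|M_s-M_{s-1}\|+\|M_n-M_s\|$ we get $\|M_s-M_{s-1}\|+\|M_n-M_s\|>n/2$ and $\|M_n\|^r\le 4^r(\|M_s-M_{s-1}\|^r+\|M_n-M_s\|^r)$. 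Since $\{\sigma\ge s\}\cap\{\tau>s-1\}\in\SC{F}_{s-1}$, I would split the event $\{\,\cdot>n/2\,\}$ into $\{\|M_s-M_{s-1}\|>n/4\}\cup\{\|M_n-M_s\|>n/4\}$ and condition successively on $\SC{F}_{s-1}$ and $\SC{F}_s$: the terms where the crossing jump $\|M_s-M_{s-1}\|$ is large contribute at most $C n^{r-p}\PP(\tau>s-1)$ (respectively $Cn^{r/2-p}\PP(\tau>s-1)$), and summation over $s$ costs only $S<\infty$; the term where $\|M_n-M_s\|$ carries the mass of $\|M_n\|^r$ contributes at most $C n^{r-p/2}\PP(\sigma=s)$, and $\sum_s\PP(\sigma=s)=\PP(\sigma\le\lceil n/2\rceil)\le C n^{-p/2}$ again yields $\le C n^{r-p}$.

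\textbf{The hard part.} The delicate remaining configuration is $\|M_n-M_s\|>n/4$ while the crossing jump is \emph{not} large: conditioning on $\SC{F}_\sigma$ with only $\PP(\|M_n-M_\sigma\|>n/4\mid\SC{F}_\sigma)\le C n^{-p/2}$ and $\EE[\|M_s-M_{s-1}\|^r\mid\SC{F}_{s-1}]\le\nu^{r/p}$, together with $\sum_s\PP(\tau>s-1)=S$, only delivers $\le C n^{-p/2}$, which is insufficient when $r<p/2$. Closing this gap requires exploiting that on $\{\tau>n\}$ the walk must \emph{also} stay above $k$ for every $k\in(\sigma,n]$, so a crossing not produced by a single large jump must be followed by a further large displacement, making the configuration much rarer; this amounts to iterating the whole decomposition on the shifted martingale $\{M_{\sigma+j}\}_j$ (equivalently, bootstrapping the decay exponent toward $p$). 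That iteration, together with the bookkeeping of which indicators are $\SC{F}_{s-1}$- versus $\SC{F}_s$-measurable so that the factors $\indic{\{\tau>s-1\}}$ can be extracted before conditioning, is the step demanding genuine care.
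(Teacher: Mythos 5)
Your overall strategy --- decompose $\{\tau>n\}$ by the first passage time over level $cn$ and by whether the crossing is produced by a single large jump --- is essentially the mechanism the paper uses (its Lemmas \ref{bdS} and \ref{bdS2}, with $S_n$ the first passage over $n/3$ and $T_n$ the first time of a jump of size $\ge n/3$). Most of your case analysis is sound, and you are right that the two factors of $n^{-p/2}$ (one from $\PP(S\le n)$, one from the post-crossing displacement) must be multiplied to reach the rate $n^{-(p-r)}$. But the configuration you flag as ``the hard part'' is a genuine gap, and your proposed fix (iterating the decomposition on the shifted martingale, bootstrapping the exponent) is neither carried out nor what is actually needed. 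The root cause is your decision to split $\|M_n\|^r\le 4^r(\|M_s-M_{s-1}\|^r+\|M_n-M_s\|^r)$ \emph{before} applying the conditional tail estimates: in the cross term $\|M_s-M_{s-1}\|^r\,1_{\{\|M_n-M_s\|>n/4\}}$ the size of $\|M_n\|$ has been decoupled from the rare event, the factor $\|M_s-M_{s-1}\|^r$ only yields $O(1)$ in conditional expectation, and summing $\PP(\tau>s-1)$ over $s$ costs $O(1)$ rather than $O(n^{-p/2})$; you are left with $O(n^{-p/2})$, which is weaker than $n^{r-p}$ precisely when $r<p/2$.

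The paper closes this case without any iteration, by never splitting $\|M_n\|^r$ in the ``no large jump'' regime. On $\{S\le T\}$ one has $\|M_{S-1}\|\le n/3$ and $\|M_S-M_{S-1}\|\le n/3$, hence $\|M_S\|\le 2n/3$, so $\{\|M_n\|\ge y\}\subset\{\|M_n-M_S\|\ge y-2n/3\}$ for $y\ge n$. Applying the conditional tail-integral formula \eqref{exp_formula} to $\|M_n\|$ given $\SC{F}_S$ and the optional-sampling form of Lemma \ref{burkapp} then gives
$\EE[\|M_n\|^r 1_{\{\|M_n\|\ge n\}}\mid\SC{F}_S]\le \theta_1 n^{r-p}\,\EE[\|M_n-M_S\|^p\mid\SC{F}_S]\le C n^{r-p/2}$ on $\{S\le n\}\cap\{S\le T\}$,
and the outer expectation contributes $\PP(S\le n)\le 3^p\theta_0 n^{-p/2}$ by Doob's inequality and \eqref{EMnbound}; the product is $Cn^{r-p}$ with no residual cross term. (The complementary event $\{T<S\}$ is then handled by summing over $T=k$, where the conditional probability $\le 3^p\nu n^{-p}$ of a jump of size $\ge n/3$ already delivers the full rate, exactly as in your cases (a)--(b).) If you wish to keep your own splitting, case (d) can also be rescued by a H\"older argument --- bound $\sum_s\EE[\|M_s-M_{s-1}\|^r 1_{\{\sigma=s\}}]=\EE[\|M_\sigma-M_{\sigma-1}\|^r 1_{\{\sigma\le\lceil n/2\rceil\}}]$ by $(\nu\,\EE[\sigma\wedge\lceil n/2\rceil])^{r/p}\PP(\sigma\le\lceil n/2\rceil)^{1-r/p}$ and multiply by the conditional factor $Cn^{-p/2}$, using $1/p+1/2\le 1$ for $p\ge2$ --- but the paper's route is simpler and is the one you should adopt.
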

The proof follows by combining Lemma \ref{bdS} and Lemma \ref{bdS2}. The steps are essentially similar to that of \citep[Theorem 4]{ref:PemRos-99}. However to make our presentation clear, we felt the need to fill in  the necessary details for our case (see Appendix).

\begin{lemma}
\label{supmartbd}
Let $\{Z_n\}$ be an $\{\SC{F}_n\}$-adapted  process taking values in $\R^d$, $\Omega_0 \subset \Omega$ measurable, $D_n$ a sequence of measurable subsets of $\R^d_+$ and $\sigma$ a stopping time. Call $\Delta_n = Z_{n+1} - Z_n$. Let $\{\gamma_n\}$ be an  $\{\SC{F}_n\}$-adapted $\R^d$-valued process . 
Suppose that 
\begin{itemize}[label=$\circ$, leftmargin=*]
\item $\{Z_n\}$ is a supermartingale for all $1\leq n \leq \sigma$, that is, $\EE[\Delta_n|\SC{F}_n] \leq 0$ for $1\leq n <\sigma$,
\item $E[\|Z_0\|^p] < \infty$'
\end{itemize}
and either
\begin{enumerate}[label={\rm (\roman*)}, leftmargin=*, align=right]
\item \label{i}$E[\|\Delta_n\|^p|\SC{F}_n] \leq L$, for  $0\leq n <\infty$;
\end{enumerate}
or
\begin{enumerate}[label={\rm (\roman*)}, leftmargin=*, align=right, start=2]
\item \label{ii}$E[\|\Delta_n\|^p] \leq L$, for  $0\leq n <\infty$.
\end{enumerate}
Assume that  on  $\Omega_0\cap\{n <\sigma\}$
\begin{enumerate}[label={\rm (\roman*)}, leftmargin=*, align=right,start =3]
 \item\label{sup1} $Z_k - \gamma_k \in D_k$, for $1\leq k\leq n$;
 \item\label{sup2} there exists a constant $\beta>0$ such that $\gamma_k \geq 0$ and $\|\gamma_k\| \geq k\beta$ for $1\leq k \leq n$.
 \end{enumerate}
Then for any $0<r<p$, there exists a constant $\theta \Let \theta(E[\|Z_0\|^p],L,p,r,\beta)$ such that in case of \ref{i}
$$\EE\bigl[\norm{Z_n}^r\indic{\Omega_0\cap\{n<\sigma \}}\bigr] \leq \frac{\theta}{n^{p-r}},$$
while in the case of \ref{ii}
$$\EE\bigl[\norm{Z_n}^r\indic{\Omega_0\cap\{n<\sigma \}}\bigr] \leq \frac{\theta}{n^{p/2-r}},$$
\end{lemma}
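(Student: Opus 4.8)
The plan is to reduce the claim, for $n\geq1$ (the statement being vacuous at $n=0$), to the martingale barrier estimates of Lemmas~\ref{martbd} and~\ref{martbd0}, by stopping $\{Z_n\}$ at $\sigma$ and passing to a Doob decomposition. First I would truncate the increments: set $\xi_n\Let\Delta_n\indic{\{\sigma>n\}}$ for $n\geq1$. Since $\{\sigma>n\}\in\SC{F}_n$ and $\Delta_n$ is $\SC{F}_{n+1}$-measurable, $\xi_n$ is $\SC{F}_{n+1}$-measurable, and the supermartingale hypothesis gives $\EE[\xi_n\mid\SC{F}_n]=\indic{\{\sigma>n\}}\,\EE[\Delta_n\mid\SC{F}_n]\leq0$ component-wise for $n\geq1$. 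The stopped process $\bar Z_n\Let Z_{n\wedge\sigma}$ satisfies $\bar Z_{n+1}-\bar Z_n=\xi_n$, so $\{\bar Z_n\}_{n\geq1}$ is a supermartingale; its Doob decomposition $\bar Z_n=M_n-A_n$ ($n\geq1$) gives an $\{\SC{F}_n\}$-martingale $\{M_n\}$ with $M_1=Z_{1\wedge\sigma}$, and a predictable, component-wise nondecreasing $\{A_n\}$ with $A_1=0$, hence $A_n\geq0$ component-wise for every $n$.

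Second, I would transport the moment hypotheses to $M$. As $\|\xi_n\|\leq\|\Delta_n\|$ and $M_{n+1}-M_n=\xi_n-\EE[\xi_n\mid\SC{F}_n]$, convexity of the Euclidean norm together with (conditional) Jensen yields $\EE[\|M_{n+1}-M_n\|^p\mid\SC{F}_n]\leq 2^p\,\EE[\|\Delta_n\|^p\mid\SC{F}_n]\leq 2^pL$ under~\ref{i}, and likewise $\EE[\|M_{n+1}-M_n\|^p]\leq 2^pL$ under~\ref{ii}; also $\EE[\|M_1\|^p]\leq 2^{p-1}\bigl(\EE[\|Z_0\|^p]+\EE[\|\Delta_0\|^p]\bigr)\leq 2^{p-1}\bigl(\EE[\|Z_0\|^p]+L\bigr)<\infty$. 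So, after the harmless re-indexing that shifts the time origin from $1$ to $0$ and a rescaling by $1/\beta$, $\{M_n\}$ satisfies the hypotheses of Lemma~\ref{martbd0} under~\ref{i} and of Remark~\ref{burkapp2} (which then gives $\EE[\|M_n\|^p]\leq\theta_0\,n^{p/2}$) under~\ref{ii}.

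Third comes the pointwise comparison tying $Z$ to $M$. On $\Omega_0\cap\{n<\sigma\}$ and for $1\leq k\leq n$ we have $k<\sigma$, so $Z_k=\bar Z_k=M_k-A_k\leq M_k$ component-wise; moreover~\ref{sup1} with $D_k\subset\R^d_+$ gives $Z_k-\gamma_k\geq0$ and~\ref{sup2} gives $0\leq\gamma_k$ with $\|\gamma_k\|\geq k\beta$. Thus $0\leq\gamma_k\leq Z_k\leq M_k$ component-wise, whence $\|M_k\|\geq\|Z_k\|\geq\|\gamma_k\|\geq k\beta$ for all $1\leq k\leq n$. Consequently $\Omega_0\cap\{n<\sigma\}\subseteq\{\|M_k\|\geq k\beta\ \text{for}\ 1\leq k\leq n\}$, and on this set $\|Z_n\|\leq\|M_n\|$, so that
\[
\EE\bigl[\|Z_n\|^r\indic{\Omega_0\cap\{n<\sigma\}}\bigr]\ \leq\ \EE\bigl[\|M_n\|^r\indic{\{\|M_k\|\geq k\beta,\ 1\leq k\leq n\}}\bigr].
\]
Under~\ref{i} the indicator on the right is that of $\{\tau_\beta>n\}$ with $\tau_\beta\Let\inf\{k\geq1:\|M_k\|<k\beta\}$, so Lemma~\ref{martbd0} applied to $M/\beta$ bounds the right-hand side by $\theta/n^{p-r}$; under~\ref{ii} the event is contained in $\{\|M_n\|\geq n\beta\}$, and feeding $\EE[\|M_n\|^p]\leq\theta_0n^{p/2}$ into the layer-cake computation of the proof of Lemma~\ref{martbd} gives $\theta/n^{p/2-r}$. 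In both cases $\theta$ depends only on $\EE[\|Z_0\|^p],L,p,r,\beta$, and the finitely many small values of $n$ lost to the re-indexing are absorbed using $\EE[\|M_n\|^r]\leq\EE[\|M_n\|^p]^{r/p}<\infty$.

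I expect this to be bookkeeping rather than analysis: the analytic core — that a martingale with $\Lp p$-controlled increments cannot hug the linear barrier $k\mapsto k\beta$ for long — is exactly what Lemmas~\ref{martbd} and~\ref{martbd0} supply, and they are invoked as black boxes. The two steps needing care are (a) the truncation at $\sigma$ and the Doob decomposition, so that $\{Z_n\}$, which is only a supermartingale on $\{1\leq n\leq\sigma\}$ and has uncontrolled $\Delta_0$, is dominated by a genuine martingale $\{M_n\}$; and (b) exploiting the orthant structure $D_k\subset\R^d_+$, $\gamma_k\geq0$ to pass from the component-wise chain $0\leq\gamma_k\leq Z_k\leq M_k$ to the norm chain $\|\gamma_k\|\leq\|Z_k\|\leq\|M_k\|$ — it is this that lets the scalar-barrier martingale lemmas be applied in the $\R^d$ setting.
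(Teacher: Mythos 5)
Your proposal is correct and follows essentially the same route as the paper's proof: stop $\{Z_n\}$ at $\sigma$, take the Doob decomposition $Z_{n\wedge\sigma}=M_n-V_n$, transfer the $\Lp p$ increment bound to $M$ (the paper does this via $\|V_{n+1}-V_n\|^p\le\EE[\|\Delta_n\|^p\mid\SC F_n]$, which is equivalent to your Jensen step), use $0\le\gamma_k\le Z_k\le M_k$ on $\Omega_0\cap\{n<\sigma\}$ to get $\|M_k\|\ge k\beta$, and invoke Lemma~\ref{martbd0} (case~\ref{i}) or Lemma~\ref{martbd} (case~\ref{ii}) for $\{M_n/\beta\}$. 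The only differences are cosmetic (explicit truncation of increments, the constant $2^pL$ versus $2^{p+1}L$).
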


\begin{proof} As in the proof of \citep[Corollary 5]{ref:PemRos-99}, the proof relies on a clever use of Doob's decomposition \citep[Theorem 5.2.10]{ref:Dur-10}, \citep[p.\ 74]{ref:EthKur-86}. By Doob's decomposition on each component, there exists a (component-wise) increasing predictable process $\{V_n\}_{n\geq 1}$ and a martingale $\{M_n\}_{n\geq 1}$ with $M_1 = Z_1$ such that 
$$Z^{\sigma}_n\equiv Z_{\sigma\wedge n} = M_n - V_n, \quad n\geq 1.$$
Note that  since $\{V_n\}$ is predictable, $\EE[Z^{\sigma}_{n+1} -Z^{\sigma}_n|\SC{F}_n ] = -(V_{n+1}-V_n).$ Hence, \\
$$\norm{V_{n+1}-V_n}^p \leq \EE\bigl[\norm{\Delta_n}^p \,\big|\, \SC{F}_n\bigr] \leq L.$$
Therefore,
\begin{align*}
\EE\bigl[\norm{M_{n+1}-M_n}^p\,\big|\,\SC{F}_n\bigr] & \leq 2^p \bigl( \EE\bigl[\norm{\Delta_n}^p\big|\, \sigalg_n\bigr] + \|V_{n+1}-V_n\|^p\bigr) \\
& \leq 2^{p+1}L.
\end{align*}
Next, observe that on $\Omega_0\cap\{n <\sigma\}$,
$ Z_{k } -\g_{k} \in D_k \subset \R^d_+, \ \ 1\leq k\leq n.$\\
Hence by \ref{sup1} and  \ref{sup2}, on $\Omega_0\cap\{n <\sigma\}$, $Z_{k} \geq \gamma_{k} \geq 0$, implying $M_{k} \geq V_{k } +\g_{k}\geq \g_{k}\geq 0,$ for $k\leq n$.
Observe that for $x,y\in \R^{d}$, $x\geq y\geq 0$ implies $\|x\| \geq \|y\|$.
It follows from \ref{sup2}  that 
$$\Omega_0\cap\{n<\sigma \}\subset \{\|M_{k}\| \geq k\beta, k\leq n\} = \{\tau>n\},$$ where
$\tau =\inf\{n> 0: \|M_n/\beta\| < n\}$.
Moreover, since $M_n = Z_n + V_n \geq Z_n$ and on $\Omega_0\cap\{n <\sigma\}$,  $Z_{n} \geq 0$ 
$$\|Z_n\|^r\indic{\Omega_0\cap\{n<\sigma \}}\leq \|M_n\|^r \indic{\{\tau> n\}}.$$
Now putting $M_0 = Z_0$ and using the fact that $E[\|Z_0\|^p] <\infty$, we have $E[\|M_1\|^p] \leq 2^p(\EE[\|Z_1-Z_0\|^p] + E[\|Z_0\|^p] )\leq 2^p(L+ E[\|Z_0\|^p)< \infty$.
The assertion now follows by applying Lemma \ref{martbd0} to the martingale $\{M_{n}/\beta\}_{n\geq 1}$.

The steps are almost exactly the same if we have \ref{ii} instead of \ref{i}, except now we apply Lemma \ref{martbd} to $\{M_{n}/\beta\}$.
\end{proof}

\newcommand{\exittime}{\tau_e}
\begin{proof}[Proof of Theorem \ref{mainth}]
Fix $N \geq 1$. Notice that 
$$\EE[\|G_N(X_N)\|^r] = \EE[\|G_N(X_N)1_{\{X_N \in C\}}\|^r] + \EE[\|G_N(X_N)1_{\{X_N \in \metsp\setmin C\}}\|^r].$$
Since by \ref{condg3}, $\EE[\|G_N(X_N)1_{\{X_N \in C\}}\|^r] <b^r$, we need to prove that \\ $\sup_{n} \EE[\|G_n(X_n)1_{\{X_n \in \metsp\setmin C\}}\|^r] <\infty$.

To this end, define the last  exit time $\exittime$ of the process $\{X_n\}$ from $C$ up to time $N$ by
$$\exittime = \max\{k\leq N \mid X_k \in C\}.$$
Note that 
\begin{equation}
\label{expdec}
\begin{aligned}
\EE\Bigl[\norm{G_N(X_N)\indic{\{X_N \in \metsp\setmin C\}}}^r\Bigr] & = \sum_{k=0}^{N}\EE\Bigl[\norm{G_N(X_N)\indic{\{X_N \in \metsp\setmin C\}}}^r \indic{\{\exittime=k\}}\Bigr]\\
	& = \sum_{k=0}^{N-1}\EE\bigl[\norm{G_N(X_N)}^r \indic{\{\exittime=k\}}\bigr].
\end{aligned}
\end{equation}
For any $k<N$, define the random variables $\g^{(k)}_n$, by
\[
\g^{(k)}_n =\g^{(k)}_{1} + \sum_{j=1}^{n-1}H_{k+j}(X_{k+j}), \qquad n\geq 2,
\]
with $\g^{k}_0 = 0$ and $\g^{k}_{1} =(a,0,\hdots,0)$.  Define the process $\bigl\{Z^{(k)}_n\bigr\}_{n}$ by
$$Z^{(k)}_n = (G_{k+n}(X_{k+n}) + \g^{(k)}_n)1_{\{X_k \in C\}}.$$
Notice that on the event $\{\exittime =k\}$, $X_k \in  C$ and $X_{k+n} \in \metsp\setmin C$ for all $1 \leq n \leq N-k$. Hence by the assumptions on the sequences $\{G_n\}$ and $\{H_n\}$, on the event $\{\exittime =k\}$
\begin{align}
\label{1}
Z^{(k)}_n-\g^{(k)}_n& =G_{k+n}(X_{k+n}) \in  G_{k+n}(\metsp\setmin C)\subset \R^d_+, \qquad 1\leq n \leq N-k.\\
\label{11}
\g^{(k)}_n&\geq 0.
\end{align}
One consequence of the above observation is that on  $\{\exittime =k\}$
\begin{align}
\label{2}
Z^{(k)}_n \geq G_{k+n}(X_{k+n}) \geq 0, \text{ hence} \quad \norm{Z^{(k)}_n} \geq \norm{G_{k+n}(X_{k+n})} \text{ for }  1\leq n \leq N-k.
\end{align}

Define the stopping time 
 $$\s^{(k)} \Let \inf \Bigl\{j>0 \,\Big|\, Z^{(k)}_j  -\g^{(k)}_j \in \R^d\setmin G_{k+j}(\metsp\setmin C)\Bigr\}.$$
 It is immediately clear from \eqref{1} that
\begin{align}
\label{containment}
\{\exittime=k\} \subset \{\s^{(k)} > N-k\}.
\end{align}

\newpage
{\bf Claim:}\begin{itemize}
\item $\EE[Z^{(k)}_{n+1} -Z^{(k)}_n|\SC{F}^{(k)}_n] \leq 0$, for $1\leq n<\sigma^{(k)}$, where $\SC{F}^{(k)}_n = \SC{F}_{k+n}$;
\item $\sup_{k,n\geq 0}\EE\bigl[\bigl\|Z^{(k)}_{n+1} -Z^{(k)}_n\bigr\|^p \,\big|\,\SC{F}^{(k)}_n\bigr] < \infty$;
\item $\sup_k \EE[\|Z^{(k)}_0\|^p] < \infty$.
\end{itemize}
{\bf Proof of Claim:} Suppose $k$ is such that  $X_k \notin C$. Then from the definition, $Z^{(k)} \equiv 0$ and the assertions in the claim are trivially satisfied. Next, suppose that $k$ is such that $X_k \in C$. Then 
$Z^{(k)}_n = G_{k+n}(X_{k+n}) + \g^{(k)}_n.$
Observe that for $1\leq n< \s^{(k)}$, we have $Z^{(k)}_{j} - \g^{(k)}_{j} =G_{k+j}(X_{k+j})\in G_{k+j}( \metsp\setmin C)$ for all $1\leq j \leq n$. 
It follows from \ref{condg2} that $X_{k+n} \in \metsp\setmin C$, for $1\leq n <\sigma^{(k)}$ and we have using \eqref{cond1} 
\begin{align*}
\EE\bigl[Z^{(k)}_{n+1} -Z^{(k)}_n \,\big|\,\SC{F}^{(k)}_n\bigr] &= \EE\bigl[G_{k+n+1}(X_{k+n+1}) - G_{k+n}(X_{k+n})\,\big|\,\SC{F}^{(k)}_n\bigr] + H_{k+n}(X_{k+n}) \\
&\leq 0.
\end{align*}
where $\SC{F}^{(k)}_n = \SC{F}_{k+n}$.
Moreover for $0\leq n <\infty$,
\begin{align*}
 \EE\bigl[\bigl\|Z^{(k)}_{n+1} -Z^{(k)}_n\bigr\|^p \,\big|\,\SC{F}^{(k)}_n\bigr] &\leq 2^p\left(\EE\bigl[\|G_{k+n+1}(X_{k+n+1}) - G_{k+n}(X_{k+n})\|^p\,\big|\,\sigalg^{(k)}_n\bigr] + b^p\right) \\
 & \leq 2^p(L+b^p).
\end{align*}
Lastly, by \ref{condg3}, $\EE[\|Z^{(k)}_0\|^p] = \EE[\|G_k(X_k)\|1_{\{X_k \in C\}}] < \sup_n\sup_{x\in C} \|G_n(x)\|^p \leq b^p$.
Hence the claim follows.
\vspace{.4cm}

Furthermore, on $\{\exittime =k\} \cap\{n<\sigma^{(k)}\}$, $Z^{(k)}_{j} - \g^{(k)}_{j} \in G_{k+j}( \metsp\setmin C) $ and $X_{k+j} \in \metsp\setmin C$ for all $1\leq j\leq n$ implying that  $\g^{(k)}_j \geq 0$ for $1\leq j\leq {n+1}$. 
Observe that that from the definition of $\gamma^{(k)}_{n}$, $\|\gamma^{(k)}_{n}\| \geq n\beta$ for some constant $\beta$. 
To see this use  the fact that there exist constants $\beta_{1}$ and $\beta_{2}$ such that $\beta_{1}\|x\| \leq \|x\|_{1}\leq \beta_{2}\|x\|$ for all $x\in\R^{d}$, where $\norm{\cdot}_{1}$ denotes the standard $\ell_{1}$-norm on $\R^{d}$.
Next noting that $\|x+y\|_{1} =\|x\|_{1} +\|y\|_{1}$ for $x,y\in \R_+^{d}$ we have
\begin{align*}
\|\gamma^{(k)}_{n}\| \geq \frac{1}{\beta_{2}}\|\gamma^{(k)}_{n}\|_{1}& =\frac{1}{\beta_{2}} \left(\|\gamma^{(k)}_{1}\|_{1} + \sum_{j=1}^{n-1}\|H_{k+j}(X_{k+j})\|_1\right) \\
&\geq \frac{\beta_{1}}{\beta_{2}} \left(\|\gamma^{(k)}_{1}\| + \sum_{j=1}^{n-1}\|H_{k+j}(X_{k+j})\|\right)\\
&\geq \frac{\beta_{1}}{\beta_{2}}\left(a+a(n-1)\right)\\
& \geq\beta n, \mbox{ for some constant } \beta \mbox{ depending on }a\mbox{ and dimension } d.
\end{align*}
Now Lemma \ref{supmartbd} gives that there exists a constant $\theta$ (depending on $p,a,b,d,L,r$) such that for $0 < r<p$,
\begin{align}
\label{subbd}
\EE\Bigl[\|Z^{(k)}_{N-k}\|^{r}\indic{\{\exittime =k\} \cap\{\sigma^{(k)}> N-k\}}\Bigr] \leq \frac{\theta}{(N-k)^{p-r}}.
\end{align}
Finally, from \eqref{2} and \eqref{containment} it follows that for $k=0,\hdots,N-1$,
$$\|G_N(X_N)\|^r\indic{\{\exittime = k\}} = \|G_{k+N-k}(X_{k+(N-k)})\|^r\indic{\{\exittime = k\}} \leq \|Z^{(k)}_{N-k}\|^r\indic{\{\exittime =k\} \cap\{\s^{(k)}>N-k\}}.$$
Hence by Lemma \ref{supmartbd},  we have
\begin{align*}
\EE\bigl[\|G_N(X_N)\indic{\{X_N\in \metsp\setmin C\}}\|^r\bigr] &= \sum_{k=0}^{N-1}\EE\bigl[\|G_N(X_N)\|^r\indic{\{\exittime = k\}}\bigr] \leq  \sum_{k=0}^{N-1}\frac{\theta}{(N-k)^{p-r}}\\
& \leq \theta\zeta(p-r) <\infty, \quad \mbox{ for } 0 < r<p-1.
\end{align*}
where $\zeta$ denotes the Riemann zeta function.

If in \ref{2condition}, instead of \eqref{cond2} we have \eqref{cond2_ii}, then by Lemma  \ref{supmartbd}  instead of \eqref{subbd}, we have
\begin{align*}
\EE\Bigl[\|Z^{(k)}_{N-k}\|^{r}\indic{\{\exittime=k\}\cap\{\sigma^{(k)}> N-k\}}\Bigr] \leq \frac{\theta}{(N-k)^{p/2-r}},
\end{align*}
and the rest of the proof stays the same.
\end{proof}

Let $\{\ort^d_\alpha \mid \alpha = 1, \ldots, 2^d\}$ denote the collection of all standard orthants of $\R^d$, i.e., the sets $\{z\in\R^d\mid z_i \ge 0 \text{ or }z_i \le 0\text{ for each }i=1, \ldots, d\}$. Recall that if $\ort$ is a non-empty positive convex cone in $\R^d$, then the conic (partial) order $\le_{\ort}$ induced by $\ort$ is defined by $x \le_{\ort} y$ if  $y - x \in \ort$. For $x,y \in \R^d$ and an orthant $\ort^d_\alpha$ we define $x\leq_\alpha y$ if $y-x \in \ort^d_\alpha$. To keep consistency with our earlier notation, when the orthant in consideration is $\R^d_+$, we write $x\leq y$, whenever $y-x\in \R^d_+$.  We note that in Theorem \ref{mainth}, the orthant $\R^d_+$ plays no special role, that is, the statement of the theorem is true if we replace $\R^d_+$ by any orthant $\ort^d_\alpha$. More precisely,
\begin{theorem}
\label{t:main_orth}
			Let $(\Omega, \sigalg, \{\sigalg_n\}, \PP)$ be a filtered probability space, $\metsp$ a complete and separable metric space and $C\subset \metsp$ measurable.
			Let the orthant $\ort^d_\alpha$ be defined by
			$$\ort^d_\alpha= \{x \in \R^d \mid x_{i_1}\leq 0,\hdots x_{i_l}\leq 0, x_j \geq 0, j\neq i_1,\hdots, i_l\}.$$
			  Let $\{G_n:\metsp \rt \R^d\}$ and $\{H_n:\metsp \rt \R^d\}$ be  sequences of measurable functions satisfying
			\begin{enumerate}[label={\rm (\roman*)}, leftmargin=*, align=right]
			\item \label{orthcondg1}for every $n$, $G_n, H_n : \metsp\setmin C \rt \ort^d_\alpha$;
			\item \label{orthcondg2}for every $n$, $G_n^{-1}G_n( \metsp\setmin C) = \metsp\setmin C$;
			\item \label{orthcondg3} there exist constants $a,b>0$, such that $a\leq\inf_n\inf_{x\notin C}\|H_n(x)\|, \\ \sup_n\sup_{x\notin C}\|H_n(x)\|\leq b$ and $\sup_n\sup_{x\in C}\|G_n(x)\| \leq b$.
			\end{enumerate}
			 Let $\{X_n\}$ be a sequence of $\{\SC{F}_n\}$-adapted  $\metsp$-valued random variables. Assume that $X_0\in C$ and the following two conditions hold:
			\begin{enumerate}[label={\rm (\roman*)}, leftmargin=*, align=right,start=4]
						\item for all $n\geq0$,
					\begin{equation}
					\label{orthcond1}
						\EE\bigl[G_{n+1}(X_{n+1})-G_n(X_n)\,\big|\,\SC{F}_n\bigr] \leq_\alpha - H_n(X_n) \quad \text{on}\quad \{X_n \notin C\};
					\end{equation}
				\item there exist constants $L > 0$ and $p>2$ such that for all $n\geq0$
					\begin{align}
					\label{orthcond2}
						\EE\bigl[\|G_{n+1}(X_{n+1})-G_n(X_n)\|^p\,\big|\,\SC{F}_n\bigr] \leq L.
					\end{align}
			\end{enumerate}
			Then for any $0<r<p-1$, there exists a constant $\eta \Let \eta(p,a,b,d,L,r)$ such that 
			\[
				\sup_{n\in\Nz} \EE\bigl[\|G_n(X_n)\|^r \bigr] \leq \eta.
			\]
			If  instead of \eqref{orthcond2} we have
			\begin{align}
					\label{orthcond2_ii}
						\EE\bigl[\|G_{n+1}(X_{n+1})-G_n(X_n)\|^p\bigr] \leq L, 
					\end{align}
		        then  for any $0<r<p/2-1$, there exists a constant $\eta \Let \eta(p,a,b,d,L,r)$ such that 
			\[
				\sup_{n\in\Nz} \EE\bigl[\|G_n(X_n)\|^r \bigr] \leq \eta.
			\]

		\end{theorem}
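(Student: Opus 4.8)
The plan is to reduce Theorem \ref{t:main_orth} to Theorem \ref{mainth} by a linear change of coordinates that maps the orthant $\ort^d_\alpha$ onto $\R^d_+$. Concretely, let $S = \mathrm{diag}(s_1, \ldots, s_d)$ be the diagonal matrix with $s_{i_1} = \cdots = s_{i_l} = -1$ and $s_j = +1$ for $j \notin \{i_1, \ldots, i_l\}$; then $S$ is an orthogonal involution ($S = S^{-1} = S^\top$, $\|Sx\| = \|x\|$) and $S(\ort^d_\alpha) = \R^d_+$, with $x \le_\alpha y$ if and only if $Sx \le Sy$ in the usual componentwise order on $\R^d_+$. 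First I would define $\tilde G_n \Let S G_n$ and $\tilde H_n \Let S H_n$ and verify that the hypotheses of Theorem \ref{mainth} hold for $\{\tilde G_n\}$, $\{\tilde H_n\}$, the same $C$, the same filtered probability space, and the same process $\{X_n\}$.

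The verification is routine and I would carry it out in order: (i) $\tilde G_n, \tilde H_n$ map $\metsp \setmin C$ into $\R^d_+$ because $G_n, H_n$ map into $\ort^d_\alpha$ and $S(\ort^d_\alpha) = \R^d_+$; (ii) $\tilde G_n^{-1}\tilde G_n(\metsp\setmin C) = G_n^{-1}S^{-1}S G_n(\metsp\setmin C) = G_n^{-1}G_n(\metsp\setmin C) = \metsp\setmin C$ since $S$ is a bijection on $\R^d$; (iii) the norm bounds are unchanged because $\|Sx\| = \|x\|$, so $a \le \inf_n\inf_{x\notin C}\|\tilde H_n(x)\|$, $\sup_n\sup_{x\notin C}\|\tilde H_n(x)\| \le b$ and $\sup_n\sup_{x\in C}\|\tilde G_n(x)\| \le b$ with the same constants $a, b$; for the drift, apply $S$ to both sides of \eqref{orthcond1}: linearity of $S$ and conditional expectation give $\EE[\tilde G_{n+1}(X_{n+1}) - \tilde G_n(X_n) \mid \SC{F}_n] = S\,\EE[G_{n+1}(X_{n+1}) - G_n(X_n) \mid \SC{F}_n] \le -S H_n(X_n) = -\tilde H_n(X_n)$ on $\{X_n \notin C\}$, where the inequality uses that $S$ maps the order $\le_\alpha$ to $\le$; and the $\Lp p$ bound \eqref{orthcond2} (resp.\ \eqref{orthcond2_ii}) transfers verbatim since $\|\tilde G_{n+1}(X_{n+1}) - \tilde G_n(X_n)\| = \|S(G_{n+1}(X_{n+1}) - G_n(X_n))\| = \|G_{n+1}(X_{n+1}) - G_n(X_n)\|$.

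Having checked all hypotheses, Theorem \ref{mainth} applies to $\{\tilde G_n\}$ and yields, for $0 < r < p-1$ (resp.\ $0 < r < p/2 - 1$ in the annealed case), a constant $\eta = \eta(p,a,b,d,L,r)$ with $\sup_n \EE[\|\tilde G_n(X_n)\|^r] \le \eta$. Since $\|\tilde G_n(X_n)\| = \|S G_n(X_n)\| = \|G_n(X_n)\|$, this is exactly $\sup_n \EE[\|G_n(X_n)\|^r] \le \eta$, completing the proof.

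There is essentially no obstacle here: the only point requiring a moment's care is the bookkeeping that $S$ is simultaneously an orthogonal map (so all the Euclidean-norm conditions are invariant) and an order isomorphism from $(\R^d, \le_\alpha)$ onto $(\R^d, \le)$ (so the vector drift inequality \eqref{orthcond1} is preserved), and that $S$ being invertible is what makes condition \ref{orthcondg2} transfer. One could alternatively re-run the proof of Theorem \ref{mainth} line by line with $\R^d_+$ replaced throughout by $\ort^d_\alpha$ and the $\ell_1$-norm argument replaced by $\|x\|_1 = \sum_i |x_i| = \sum_i s_i x_i$ for $x \in \ort^d_\alpha$; the change-of-variables argument above is simply the clean packaging of that observation, which is why the remark preceding the theorem asserts that ``the orthant $\R^d_+$ plays no special role.''
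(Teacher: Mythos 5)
Your proof is correct and is essentially identical to the paper's own argument: the authors define the same sign-flip operator (which they call $\delta_\alpha$), note it is self-adjoint, unitary, and carries $\ort^d_\alpha$ onto $\R^d_+$, transform $G_n$ and $H_n$ by it, verify the hypotheses of Theorem \ref{mainth} transfer, and conclude via norm invariance. They likewise mention, as you do, the alternative of re-running the proof of Theorem \ref{mainth} with $\R^d_+$ replaced throughout by $\ort^d_\alpha$.
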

		
		\begin{proof}
		The proof is exactly the same as that of Theorem \ref{t:main} since all the steps remain valid if we replace $\R^{d}_+$ by $\ort^d_\alpha$. Alternatively, we can derive Theorem \ref{t:main_orth} from Theorem \ref{t:main} by `rotating' the orthant $\ort^d_\alpha$ to $\R^d_+$. \\	
		For $x\in \R^d$, define the operator $\delta_\alpha $ on $\R^d$ by the following action on $x$:
\[
 	\text{\emph{alter the sign of $x_{i_1},\hdots,x_{i_l}$, and keep the remaining co-ordinates unchanged.}}
\]
	Note that $\delta_\alpha$ is a self adjoint and unitary operator and $\delta_\alpha(\ort^d_\alpha) = \R^d_+$.
	Define the sequences  $\{G^\alpha_n\}$ and $\{H^\alpha_n\}$ by
	$$G^\alpha_n(x) = \delta^\alpha G_n( x), \ \ H^\alpha_n(x) = \delta^\alpha H_n( x) $$
	Then \ref{orthcondg1} and \ref{orthcondg2} imply \ref{condg1} and \ref{condg2} of Theorem \ref{t:main} for the sequences $\{G^\alpha_n\}$ and $\{H^\alpha_n\}$. Also, $a\leq\inf_n\inf_{x\notin C}\|H_n^\alpha(x)\|, \sup_n\sup_{x\notin C}\|H_n^\alpha(x)\|\leq b$ and $\sup_n\sup_{x\in C}\|G_n^\alpha(x)\|\leq b.$
	Finally, \eqref{orthcond1} and \eqref{orthcond2} imply that the sequence of processes $\{ X_n\}$ satisfies
	\begin{itemize}
				\item $\displaystyle{	\EE\bigl[G^\alpha_{n+1}(X_{n+1})-G_n^\alpha( X_n)\,\big|\,\SC{F}_n\bigr] \leq - H^\alpha_n( X_n) \quad \text{on}\quad \{ X_n \notin C\}},$
					
				\item $\displaystyle{\EE\bigl[\|G^\alpha_{n+1}( X_{n+1})-G^\alpha_n( X_n)\|^p\,\big|\,\SC{F}_n\bigr] \leq L}.$
				
		\end{itemize}
		Consequently, Theorem \ref{t:main} says that there exists an $\eta>0$ such that for all $n \in \Nz$,
		$$\EE\left[\|G^\alpha_n( X_n)\|^r \right]  =\EE\left[\|G_n( X_n)\|^r \right] \leq \eta$$

\end{proof}

\subsection*{{\bf Existence of invariant probability measures for Markov chains}}
	\newcommand{\proc}{X}
	\def\tp{P}

		Let $\metsp$ be a complete and separable metric space. Let $\{\proc_n\}$ be a $\metsp$-valued Markov process with transition kernel $\tp : \metsp\times\Borelsigalg{\metsp}\lra[0, 1]$, where $\Borelsigalg{\metsp}$ denotes the Borel $\sigma$-algebra on $\metsp$.  For $g:\metsp\rt\R^d$, define $\tp g:\metsp \rt \R^d$ by $\tp g(x) \Let \int_{\metsp} \tp(x, \drv y) g(y)=\EE[g(X_1)|X_0 = x] =\EE_x[g(X_1)]$. Suppose that $C\subset \metsp$ is measurable and $\bar{\Ball}_\kappa$ denotes the closed Euclidean ball of radius $\kappa$ centered at the origin in $\R^d$. Assume that 
		\begin{enumerate}[label={\rm (\roman*)}, leftmargin=*, align=right]
		\item $\tp$ is \emph{(weak) Feller}, i.e., if $f:\metsp \lra \R$ is a continuous and bounded function, then $\tp f$ is continuous and bounded;
		\item  there exist a measurable map $G:\metsp\rt\R^d$, a measurable function $H:\metsp\rt\R^d$ and  constants $a,b>0$ such that
		\begin{itemize}[label=$\circ$, leftmargin=*]
		        \item  there exists  $\alpha\in\{1,\hdots,2^d\}$ such that $G, H : \metsp \setmin C \rt \ort^d_{\alpha}$;
			\item  $G^{-1}G(\metsp \setmin C) = \metsp \setmin C$;
			\item $G^{-1}(\bar{B}_\kappa) \equiv\{x\in \R^m:\|G(x)\| \leq \kappa\}$ is compact for every $\kappa>0$;
			\item  $\sup_{x\in C} \|G(x)\| <\infty$;
			\item $a\leq \|H(x)\|\leq b$, for all $x\notin C$;
		\end{itemize}
	\item	 for all $x\in \metsp \setmin C$
		\begin{equation}
		\label{e:vecdriftMarkov}
			\tp G(x) - G(x) \leq_{\alpha} -H(x);
		\end{equation}
		
		\item there exist constants $L>0$ and $p>2$ such that for all $x\in\metsp$
		\[
			\tp( \|G(\cdot) -G(x)\|^p)(x) =  \EE_x\bigl[\norm{G(\proc_1) - G(x)}^p \bigr] \le L.
		\]
		\end{enumerate}
Then $\{X_n\}$ has an invariant probability measure. To see this, first observe that an application of  Theorem \ref{t:main_orth} gives  $\sup_n\EE_x\bigl[\|G(X_n)\|^r\bigr] < \infty$ for all $0<r<p-1$ and $x\in C$. Fix  $0<r<p-1$ and  $x\in C$.
Let $\epsilon >0$ and let $\kappa$ be such that $\sup_n\EE_x\bigl[\|G(X_n)\|^r\bigr]/\kappa<\epsilon.$
Notice that
\begin{align*}
\PP_x(X_n \notin G^{-1}(\bar{B}_\kappa)) =\PP_x (\|G(X_n)\| > \kappa) \leq \sup_n\EE_x\bigl[\|G(X_n)\|^r\bigr]/\kappa <\epsilon.
\end{align*}
Since $G^{-1}(\bar{B}_\kappa)$ is compact, it follows that   $\{\tp^n(x,\cdot)\}$ is tight.
Define the C\`esaro sum $\tp^{(n)}$ by
$$\tp^{(n)}(x,\cdot)  \Let \frac{1}{n}\sum_{k=0}^{n-1}\tp^k(x,\cdot).$$
It is immediately clear that the sequence of probability measures $\{\tp^{(n)}(x,\cdot)\}$ is tight and hence relatively compact. Let $\mu$ be a probability measure on $\metsp$ which is a limit point of  $\{\tp^{(n)}(x,\cdot)\}$. Then the Krylov-Bogoliubov theorem (\citep[Proposition 7.2.2]{ref:Her-LerLas-03}, \citep[Theorem 3.1.1]{ref:daPrato96}) shows that $\mu$ is invariant.

	\section{Uniform moment bounds for discrete-time iterated function systems}
	\label{s:app3}
		Consider a discrete-time Markov process $\{Z_t\}_{t\in\Nz} \Let \{(x_t, y_t)\}_{t\in\Nz}$ taking values in $\R^d_+\times \PSet$, where $\PSet$ is a countable set, defined by the following rules:
		\begin{enumerate}[label=(IFS\arabic*), align=left, leftmargin=*]
			\item \label{cond:ifs:maps} for each $i\in\PSet$ there exists a measurable mapping $f(\cdot, i):\R^d_+ \lra \R^d_+$;
			\item \label{cond:ifs:trk} there exists a measurable map $P:\R^d_+ \times \PSet\times \PSet\lra[0, 1]$ such that for each $(x, y)\in\R^d_+\times\PSet$ the function $P(x, y, \cdot)$ is a transition probability;
			\item \label{cond:ifs:transition} at time $t = n$, given the state $(x_n, y_n) = (x, y)$, 
				\begin{itemize}[label=$\circ$, leftmargin=*]
					\item first $y_{n+1}$ is selected randomly according to a time-homogenous but $x$-dependent transition kernel $P_x(y, \cdot) \Let P(x, y, \cdot)$, and
					\item given $y_{n+1}$, we set $x_{n+1} = f(x, y_{n+1})$.
				\end{itemize}
		\end{enumerate}
		Observe that neither of the process $\{x_t\}_{t\in\Nz}$ or $\{y_t\}_{t\in\Nz}$ is Markovian on its own. The transition kernel of the process $\{Z_t\}_{t\in\Nz}$ stands as
		\[
			\PP\bigl(Z_{t+1} = (x', y') \mid Z_t = (x, y)\bigr) = P_x(y, y') \delta_{f(x, y')}(x'),
		\]
		where $\delta$ is the Dirac measure. The stochastic system
		\begin{equation}
		\label{e:ifs}
			x_{t+1} = f(x_t, y_{t+1}),\qquad (x_0, y_0) \in\R^d_+\times\PSet \text{ given},
		\end{equation}
		derived from the process $\{Z_t\}_{t\in\Nz} = \{(x_t, y_t)\}_{t\in\Nz}$ constructed above is known as an \emph{iterated function system with place dependent probabilities} \citep{ref:BarDemEltGer-88}. These systems are generally employed in the synthesis of fractals, modeling biological phenomena, etc \citep{ref:LasMac-94}.

		Iterated function systems are important objects of study in control theory, where they are known by the name discrete-time stochastic hybrid systems \citep{ref:ChaPal-11, ref:AbaKatLygPra-10, ref:SumLyg-10, ref:ChaCinLyg-11}. There is a considerable literature addressing classical weak stability questions concerning the existence and uniqueness of invariant measures of iterated function systems, see e.g., \citep{ref:Pei-93, ref:LasYor-94, ref:Sza-03, ref:DiaFre-99, ref:JarTwe-01} and the references therein. The arguments in these articles predominantly revolve around average contractivity conditions of the iterated function system, and continuity of the probability transitions. Stronger stability notions such as existence of moments of sufficiently high order mostly involve Foster-Lyapunov drift conditions, which in turn work best under the average contractivity assumption. Although there have been efforts to relax average contractivity conditions in conjunction with Foster-Lyapunov drift conditions, see e.g., \citep{ref:DouForMouSou-04}, generally the assertions consist of sub-geometric rates of convergence of Markov processes to their invariant measures; moreover, such techniques do not extend directly to moment bounds. Furthermore, in real-world control applications the average contractivity property generally translates to requiring unbounded control actions, which is hardly ever possible to guarantee. In this section we give conditions for uniform $\Lp r(\PP)$-boundedness of the process $\{x_t\}_{t\in\Nz}$ generated by \eqref{e:ifs} in the absence of average contractivity. To this end, we further assume that
		\begin{enumerate}[label=(IFS\arabic*), leftmargin=*, align=left, start=4]
			\item \label{cond:ifs:boundedjump} there exists a constant $L > 0$ such that 
				\[
					\norm{x - f(x, i)} \le L\quad \text{for every }(x, i)\in\R^d_+\times\PSet.
				\]
		\end{enumerate}
		\begin{remark}
			Observe that the existence of a uniform bound on the jumps hypothesized in condition \ref{cond:ifs:boundedjump} above implies that an ``average contractivity'' condition is impossible to satisfy without transforming coordinates. The condition \ref{cond:ifs:boundedjump} holds for a large class of realistic nonlinear control systems, especially under bounded control actions.
		\end{remark}

		We have the following:
		\begin{proposition}
		\label{p:ifs}
			Consider the system \eqref{e:ifs} and suppose that the conditions \ref{cond:ifs:maps}, \ref{cond:ifs:trk}, \ref{cond:ifs:transition}, and \ref{cond:ifs:boundedjump} hold. In addition, suppose that there exist a measurable bounded set $C\subset\R^d_+$ and a vector $a\in\R$ with $a > 0$ such that
			\[
				\sum_{y'\in\PSet} P_x(y, y') f(x, y') - x \le - a\frac{x}{\norm{x}}\quad\text{for }(x, y)\in(\R^d_+\setmin C)\times\PSet.
			\]
			Then the process $\{x_t\}_{t\in\Nz}$ is $\Lp r(\PP)$-bounded for every $r > 0$.
		\end{proposition}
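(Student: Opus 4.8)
The plan is to recognize Proposition~\ref{p:ifs} as a direct instance of Theorem~\ref{t:main} (the orthant $\R^d_+$ suffices, so the general Theorem~\ref{t:main_orth} is not even needed). Take the ambient space to be the complete separable metric space $\metsp\Let\R^d_+\times\PSet$, equipped with the natural filtration $\{\SC F_n\}$ of the Markov chain $\{Z_n\}=\{(x_n,y_n)\}$; set $G_n(x,y)\Let x$ and $H_n(x,y)\Let a\,x/\norm{x}$ for every $n$; and let the ``bad'' set be $\widehat C\Let\bigl(C\cup\{0,x_0\}\bigr)\times\PSet$. Adjoining the two points $0$ and $x_0$ to $C$ keeps $\widehat C$ bounded and only \emph{shrinks} the set $(\R^d_+\setmin\widehat C)\times\PSet$ on which the drift assumption is in force, so it costs nothing; its purpose is to guarantee simultaneously that $X_0=(x_0,y_0)\in\widehat C$, as Theorem~\ref{t:main} requires, and that $H_n$ is well defined (no division by zero) everywhere on $\metsp\setmin\widehat C$.

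Next I would verify the three structural hypotheses of Theorem~\ref{t:main}. On $\metsp\setmin\widehat C$ one has $x\in\R^d_+\setmin(C\cup\{0,x_0\})$, whence both $G_n(x,y)=x$ and $H_n(x,y)=a\,x/\norm{x}$ lie in $\R^d_+$; this is \ref{condg1}. Since $G_n$ depends only on the first coordinate, $G_n(\metsp\setmin\widehat C)=\R^d_+\setmin(C\cup\{0,x_0\})$ and $G_n(\widehat C)=C\cup\{0,x_0\}$ are disjoint, which is precisely the necessary-and-sufficient form of \ref{condg2} recorded after Theorem~\ref{t:main}. Finally $\norm{H_n(x,y)}\equiv a$, and $\sup_n\sup_{(x,y)\in\widehat C}\norm{G_n(x,y)}=\sup_{x\in C\cup\{0,x_0\}}\norm{x}<\infty$ because $\widehat C$ is bounded, so \ref{condg3} holds with the given $a$ and some finite $b$.

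The two dynamic conditions \eqref{cond1}--\eqref{cond2} are then read off the IFS hypotheses. Because $x_{n+1}=f(x_n,y_{n+1})$ is the first coordinate of $Z_{n+1}$ and $\{Z_n\}$ is Markov with respect to $\{\SC F_n\}$, one has $\EE[G_{n+1}(Z_{n+1})-G_n(Z_n)\mid\SC F_n]=\sum_{y'\in\PSet}P_{x_n}(y_n,y')f(x_n,y')-x_n$, which on $\{Z_n\notin\widehat C\}\subset\{x_n\notin C\}$ is $\le-a\,x_n/\norm{x_n}=-H_n(Z_n)$ by the standing drift assumption, giving \eqref{cond1}. Condition \ref{cond:ifs:boundedjump} gives the pointwise bound $\norm{G_{n+1}(Z_{n+1})-G_n(Z_n)}=\norm{f(x_n,y_{n+1})-x_n}\le L$, hence $\EE[\norm{G_{n+1}(Z_{n+1})-G_n(Z_n)}^p\mid\SC F_n]\le L^p$ for \emph{every} $p>2$, which is \eqref{cond2} with constant $L^p$.

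Theorem~\ref{t:main} then produces, for each $p>2$ and each $0<r<p-1$, a finite constant $\eta$ with $\sup_{n\in\Nz}\EE[\norm{x_n}^r]=\sup_{n\in\Nz}\EE[\norm{G_n(Z_n)}^r]\le\eta$. Letting $p\to\infty$, $r$ is allowed to be any positive number, which is exactly the asserted $\Lp r(\PP)$-boundedness of $\{x_t\}_{t\in\Nz}$. I do not expect any genuinely hard step: the only points needing a little care are the harmless enlargement of $C$ by $\{0,x_0\}$ and the check of \ref{condg2}, which is a genuinely multi-dimensional requirement but holds trivially here because $G_n$ is a coordinate projection.
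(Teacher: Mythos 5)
Your proof is correct and follows essentially the same route as the paper: apply Theorem~\ref{t:main} with $G_n(x,y)=x$ and $H_n(x,y)=a\,x/\norm{x}$, read off \eqref{cond1} from the drift hypothesis and \eqref{cond2} from \ref{cond:ifs:boundedjump}, and let $p\to\infty$ to cover every $r>0$. Your additional care (enlarging $C$ by $\{0,x_0\}$ so that $X_0$ lies in the exceptional set and $H_n$ is well defined, and the explicit check of \ref{condg2} via the coordinate-projection structure) fills in details the paper's proof leaves implicit.
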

		\begin{proof}
			Let $\{\sigalg_t\}_{t\in\Nz}$ be the natural filtration generated by the process $\{Z_t\}_{t\in\Nz}$. For any given $p > 2$, we see at once that the condition \ref{cond:ifs:boundedjump} implies that 
			\begin{equation}
			\label{e:Lpbound}
				\EE\bigl[\norm{x_{t+1} - x_t}^p\,\big|\,\sigalg_t\bigr] \le L^p\quad \text{for all }t\in\Nz;
			\end{equation}
			therefore, condition \ref{cond2} of Theorem \ref{t:main} holds. Moreover,
			\begin{align*}
				\EE\bigl[x_{t+1} - x_t\,\big|\,\sigalg_t\bigr] & = \int_{\R^d_+} \sum_{y'\in\PSet} P_{x_t}(y_t, y') \delta_{f(x_t, y')}(x')\id(x')\,\drv x' - x_t\\
					& = \sum_{y'\in\PSet} P_{x_t}(y_t, y') f(x_t, y') - x_t\\
					& \le -a \frac{x_t}{\norm{x_t}}\quad\text{on }\{x_t\in \R^d_+\setmin C\}
			\end{align*}
			in view of our hypotheses; therefore, condition \ref{cond1} of Theorem \ref{t:main} holds with $G_n(x) \equiv x$ and $H_n(x) = a\frac{x}{\|x\|}$. We conclude that the process $\{x_t\}_{t\in\Nz}$ is $\Lp r(\PP)$ bounded for all $0 < r < p-1$. Furthermore, since the bound on the right-hand side of \eqref{e:Lpbound} is finite for all $p > 0$, Theorem \ref{t:main} also implies that $\{x_t\}_{t\in\Nz}$ is $\Lp r(\PP)$-bounded for every $r > 0$. The assertion follows.
		\end{proof}

	\section{Connection to biochemical reaction systems}
	\label{s:app4}
\def\Nsp{n}
\def\Nreact{v}
\def\change{\nu}

A biochemical reaction system involves multiple chemical reactions and several species. In general, chemical reactions in single cells occur far from thermodynamic equilibrium and the number of molecules of chemical species is often low \citep{ref:Kei87, ref:Gup95}. Recent advances in real-time single cell imaging, micro-fluidic techniques and synthetic biology have testified to the random nature of gene expression and protein abundance in single cells \citep{ref:Yuetal06, ref:Friedman10}. Thus a stochastic description of chemical reactions is often mandatory to analyze the behavior of the system.  The dynamics of the system is typically modeled by a continuous-time Markov chain (CTMC) with the state being the number of molecules of each species. \citep{ref:AndKur-11} is a good reference for a review of the tools of Markov processes used in the reaction network systems. Analyzing stability of stochastically modeled biochemical reaction systems (e.g, gene regulatory networks) in particular, questions dealing with existence of invariant probability measures, moment bounds are important both for experimental and theoretical purpose \citep{ref:AndCraKurtz10, ref:SamKham04}. The goal of this section is to outline a method to investigate these kind of stability questions for biochemical reaction networks.

 Consider a biochemical reaction system consisting of $\Nsp$ species and $\Nreact$ reactions, and let $X(t)$  denote the state of the system at time $t$ in $\Z^\Nsp_+$. If the $k$-th reaction occurs at time $t$, then the system is updated as 
           $X(t) = X(t-) + \change^+_{k}-\change^-_{k},$
where $X(t-)$ denotes the state of the system just before time $t$, and $\change^-_{k}, \change^+_{k} \in \Z^\Nsp_+$ represent the vector of number of molecules consumed and created in one occurrence of reaction $k$, respectively. For convenience, let $\change_k \Let  \change^+_{k}-\change^-_{k}$. The evolution of the process $X$ is modeled by
$$\PP[X(t+\Delta t) = x +\change_k| X(t) =x] = a_k(x)\Delta t + o(\Delta t).$$
The quantity $a_k$ is usually called the {\em propensity} of the reaction $k$ in the chemical literature, and its expression is often calculated by using the {\em law of mass action} \citep{ref:wilkinson_2006, ref:Gillespie2007}. The generator matrix or the $Q$-matrix of the CTMC $X$  is given by
$q_{x,x+\change_k} =a_k(x).$
The CTMC $X$ will have an {\em invariant measure} $\pi$ if $\pi Q\equiv 0$.

Let $\Ball_\rho$ be the standard open ball of radius $\rho$ centered at $0$ in $\R^\Nsp$.  Assume that there exist a function $H:\Z^\Nsp_+\rt \R^\Nsp_+$ and a constant $\rho>0$ such that 
\begin{enumerate}[label={\rm (BRS\arabic*)}, leftmargin=*, align=right]
         \item \label{RS1}there exist constants $a, b>0$ such that $a\leq \|H(x)\| \leq b$, for $x \in \Z^\Nsp_+ \setmin \Ball_\rho$;
	\item \label{drift} $\displaystyle{F(x) \Let \sum_{k=1}^\Nreact a_k(x) \change_k \leq - H(x)A(x)\quad\text{for }x\in \Z^\Nsp_+\setmin  \Ball_\rho }$, where  $A(x)  =\sum_{k=1}^\Nreact a_k(x)$;
	\item \label{mmt}$\displaystyle{F_p(x) \Let \sum_{k=1}^\Nreact a_k(x) \|\change_k\|^p \leq L A(x)} $
                                    for some constants $p>2$ and  $L>0$.
\end{enumerate}
Possible examples of $H$ include constant vector with positive entries, $H(x) = (\alpha x+\beta)/\|\alpha x+\beta\|, \alpha>0,\beta\geq 0$, etc.
Let $\{Y_n\}$ be the jump chain corresponding to the CTMC $X$. That is, putting $\tau_0=0$, we define inductively
\[
	\tau_{n+1} \Let \inf\bigl\{t>\tau_n\,\big|\, X(t) \neq X(\tau_n)\bigr\}.
\]
Notice that $\tau_n$ denotes the $n$-th jump time of the CTMC $X$. Define $Y_n = X(\tau_n)$. $\{Y_n\}$ is a discrete-time Markov chain and is often called the jump chain or the skeleton chain corresponding to the CTMC $X$. Now \ref{RS1} and \ref{drift} imply $\sup_n E\bigl[\norm{Y_n}^r\bigr] <\infty $ for $0<r<p-1$.
To see this, we first obtain the transition matrix of the Markov chain $\{Y_n\}$  from the $Q$-matrix of $X$ (see e.g., \cite[p.\ 108]{ref:Nor-98}). Specifically, 
\begin{eqnarray*}
\PP(Y_{n+1}=x+\nu_k|Y_n = x)& = 
\begin{cases}
a_k(x)/A(x),& \quad \mbox{if } A(x) \neq 0\\
0,& \quad \mbox{if } A(x) =0.
\end{cases}
\end{eqnarray*}

\begin{eqnarray*}
\PP(Y_{n+1}=x|Y_n = x)& = 
\begin{cases}
0,& \quad \mbox{if } A(x) \neq 0\\
1,& \quad \mbox{if } A(x) =0.
\end{cases}
\end{eqnarray*}
In most biochemical reaction systems $A(x) >0$ for all $x \in \Z^\Nsp_+$ or at least outside a compact set of $\Z^\Nsp_+$.
Now for all $x \in Z^\Nsp_+ \setmin \Ball_\rho$, by \ref{drift}
\begin{align*}
\EE[Y_{n+1} -Y_n|Y_n =x] =  \sum_{k=1}^\Nreact \change_ka_k(x)/A(x) \leq -H(x)
\end{align*}
Moreover by \ref{mmt}, for all $x\in \Z^\Nsp_+$
\begin{align*}
\EE[\|Y_{n+1} -Y_n\|^p|Y_n =x] =  \sum_{k=1}^\Nreact \|\change_k\|^pa_k(x)/A(x) \leq L
\end{align*}

 Thus \ref{drift} and \ref{mmt} imply \eqref{cond1} and \eqref{cond2} of Theorem \ref{t:main} for the Markov chain $\{Y_n\}$ with $G_n(x) \equiv x$, and consequently, $\sup_n E\bigl[\norm{Y_n}^r\bigr] <\infty $ for $0<r<p-1$.  Now, the discussion after the  proof of Theorem \ref{t:main_orth} shows that $\{Y_n\}$ has an invariant probability measure $\lambda$. Consequently, it follows from \cite[Theorem 3.5.1]{ref:Nor-98} that if $A(x)>0$ for all $x\in Z^\Nsp_+$, then $\pi(x) \equiv \lambda(x)/A(x)$ is an invariant measure for the CTMC $X$. If $\inf_{x\in \Z^\Nsp_+} A(x) >0$, then the CTMC $X$ has an invariant probability measure. Of course, if we are just interested in the existence of an invariant probability measure and \ref{drift}, \ref{mmt} do not hold, then the discussion after the proof of Theorem \ref{t:main_orth} can be employed to look for a suitable $G$.

\setcounter{section}{0}
\setcounter{theorem}{0}
\setcounter{equation}{0}
\appendix
\section*{Appendix}
\renewcommand{\thesection}{A} 
\renewcommand{\theequation}{A.\arabic{equation}}

\begin{lemma}
\label{bdS}
Let $\{M_n\}$ and $\tau$ be as in Lemma \ref{martbd0} and assume that for some  $p>0$, there exists a constant $\nu$ such that
$$\EE[\|M_{n+1}-M_{n}\|^p|\SC{F}_n] \leq \nu\quad \text{for all } n \geq 0.$$
For $k>0$, let $S_k = \inf\{j\geq 0: \|M_j\|\geq k/3\}$.
Let $T_k=\inf\{j\geq 0: \|M_{j+1} - M_j\| \geq k/3\}$. Then there exists a constant $\theta'$ such that
$$ \EE[\|M_n\|^r1_{\{\tau>n\}}1_{\{S_n\leq T_n\}}] \leq\frac{\theta'}{ n^{p-r}}.$$

\end{lemma}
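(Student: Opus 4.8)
The plan is to estimate $\EE\bigl[\|M_n\|^r\indic{\{\tau>n\}}\indic{\{S_n\le T_n\}}\bigr]$ by conditioning on the crossing time $S_n$ and extracting from the event $\{\tau>n\}$ \emph{two} large deviations, each of probability of order $n^{-p/2}$, whose product delivers the required $n^{r-p}$; by contrast, conditioning at $S_n$ and using only the endpoint constraint $\|M_n\|\ge n$ produces merely $n^{r-p/2}$, which is Lemma~\ref{martbd}. Here $\{\tau>n\}=\{\|M_j\|\ge j,\ 1\le j\le n\}$, and I will use Burkholder's inequality in the form $\EE[\|M_n-M_k\|^p\,|\,\SC{F}_k]\le c_p\nu\,(n-k)^{p/2}$ (Lemma~\ref{burkapp} with $\nu_j\equiv\nu$) and its unconditional version (Remark~\ref{burkapp2}). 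The first large deviation is built in: on $\{\tau>n\}$ one has $\|M_{\lceil n/3\rceil}\|\ge\lceil n/3\rceil\ge n/3$, hence $S_n\le\lceil n/3\rceil$, so that
\[
\indic{\{\tau>n\}}\indic{\{S_n\le T_n\}}=\indic{\{\tau>n\}}\indic{\{S_n=0\}}+\sum_{i=1}^{\lceil n/3\rceil}\indic{\{\tau>n\}}\indic{\{S_n=i,\,S_n\le T_n\}},
\]
and it suffices to bound each term.

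First I would handle the bulk, $1\le i\le\lceil n/3\rceil$. On the $\SC{F}_i$-measurable event $\{S_n=i,\,S_n\le T_n\}$ one has $\|M_{i-1}\|<n/3$ by definition of $S_n$ and $\|M_i-M_{i-1}\|<n/3$ because $T_n\ge i$, hence $\|M_i\|<2n/3$; and on $\{\tau>n\}\subset\{\|M_n\|\ge n\}$ this forces $\|M_n-M_i\|>n/3$ together with $\|M_n\|<2n/3+\|M_n-M_i\|$. Conditioning on $\SC{F}_i$ and combining the elementary inequality $(2n/3+x)^r\le c_r(n^r+x^r)$, Markov's inequality, the bound $\|M_n-M_i\|^{r-p}\le(n/3)^{r-p}$ valid on $\{\|M_n-M_i\|>n/3\}$ since $r<p$, and Burkholder ($\EE[\|M_n-M_i\|^p\,|\,\SC{F}_i]\le c_p\nu(n-i)^{p/2}\le c_p\nu\,n^{p/2}$), I get, on $\{S_n=i,\,S_n\le T_n\}$,
\[
\EE\bigl[\|M_n\|^r\indic{\{\tau>n\}}\,\big|\,\SC{F}_i\bigr]\le c_r\bigl(n^r\,\PP(\|M_n-M_i\|>n/3\,|\,\SC{F}_i)+(n/3)^{r-p}\,\EE[\|M_n-M_i\|^p\,|\,\SC{F}_i]\bigr)\le C\,n^{r-p/2}.
\]
Multiplying by $\indic{\{S_n=i,\,S_n\le T_n\}}$, using the tower property and the $\SC{F}_i$-measurability of that indicator, and summing, the bulk is bounded by $C\,n^{r-p/2}\sum_{i=1}^{\lceil n/3\rceil}\PP(S_n=i)\le C\,n^{r-p/2}\,\PP(S_n\le\lceil n/3\rceil)$.

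The second large deviation now enters through $\PP(S_n\le\lceil n/3\rceil)=\PP\bigl(\max_{0\le j\le\lceil n/3\rceil}\|M_j\|\ge n/3\bigr)$: splitting according to whether $\|M_0\|\ge n/6$ or $\max_{0\le j\le\lceil n/3\rceil}\|M_j-M_0\|\ge n/6$, the former has probability $\le(6/n)^p\EE[\|M_0\|^p]$ and the latter, by Doob's maximal inequality applied to the nonnegative submartingale $\{\|M_j-M_0\|^p\}_j$ and Burkholder, is $\le(6/n)^p\,c_p\nu\,n^{p/2}$; hence $\PP(S_n\le\lceil n/3\rceil)\le C\,n^{-p/2}$ and the bulk is $\le C\,n^{r-p}$. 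The term $i=0$ is easiest: $\{S_n=0\}=\{\|M_0\|\ge n/3\}$ is itself rare, so using $\{\tau>n\}\subset\{\|M_n\|\ge n\}$, the bound $\|M_n\|^p\le 2^{p-1}(\|M_n-M_0\|^p+\|M_0\|^p)$, conditioning on $\SC{F}_0$, Burkholder, and $\EE[\|M_0\|^p]<\infty$, one obtains $\EE[\|M_n\|^r\indic{\{\tau>n\}}\indic{\{S_n=0\}}]\le n^{r-p}\EE[\|M_n\|^p\indic{\{\|M_0\|\ge n/3\}}]\le C\,n^{r-p}$. Adding the three contributions gives the bound $C\,n^{r-p}$ for every $n\ge1$; since for each fixed $n$ the left side is finite (as $\EE[\|M_n\|^p]\le\theta_0 n^{p/2}<\infty$), enlarging the constant to absorb the finitely many small $n$ yields the asserted $\theta'$.

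The main obstacle is recognising that there are two deviations to pay for rather than one: the naive route that conditions at $S_n$ and uses only $\|M_n\|\ge n$ yields exactly $n^{r-p/2}$, and the missing factor $n^{-p/2}$ must be recovered from the cost of the martingale reaching level $n/3$ already by time $\lceil n/3\rceil$, captured as the small probability $\PP(S_n\le\lceil n/3\rceil)$ of a large deviation over the short window $[0,\lceil n/3\rceil]$ via Doob's maximal inequality. The hypothesis $\{S_n\le T_n\}$ plays the essential role of supplying the a priori bound $\|M_{S_n}\|<2n/3$, without which the passage from $M_{S_n}$ to $M_n$ need not be a deviation of size $\ge n/3$ and the whole estimate collapses; the complementary regime $\{S_n>T_n\}$, where a single oversized increment is responsible, is precisely what Lemma~\ref{bdS2} handles.
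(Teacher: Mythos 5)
Your proof is correct and follows essentially the same route as the paper's: both split at the first time $S_n$ that $\|M_\cdot\|$ reaches level $n/3$, use $\{S_n\le T_n\}$ to guarantee $\|M_{S_n}\|\le 2n/3$ so that on $\{\tau>n\}$ the martingale must travel a further $n/3$ (costing $n^{r-p/2}$ conditionally, via Burkholder and Markov), and then pay a second factor $n^{-p/2}$ for the probability that $S_n$ occurs at all. The differences are purely technical — you sum over $\{S_n=i\}$ and condition on $\SC{F}_i$ (restricting to $i\le\lceil n/3\rceil$ and using Doob's maximal inequality) where the paper conditions on $\SC{F}_{S_n}$ via optional sampling and bounds $\PP(S_n\le n)$ through the submartingale property of $\|M_n\|^p$; both yield the same rates, and your separate treatment of $S_n=0$ in fact covers a case the paper's argument (which invokes $M_{S-1}$) leaves implicit.
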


\begin{proof}
For notational convenience, put $S_n\equiv S$ and $T_n\equiv T$. First, notice that from the definitions, $\{\tau>n\} \subset \{\|M_n\| \geq n\geq n/3\} \subset \{S \leq n\}$. 
Also, on the event $\{S \leq T\}$, $\|M_{S} - M_{S-1}\| \leq  n/3$ and since $\|M_{S-1}\| \leq  n/3 $, we have $\|M_{S}\| \leq 2 n/3$ on $\{S\leq T\}$.
Now
\begin{align}
\label{SleqT}
\non\EE[\|M_n\|^r1_{\{\tau>n\}}1_{\{S\leq T\}}]&  =\EE[\|M_n\|^r1_{\{\tau>n\}}1_{\{S\leq n\}}1_{\{S\leq T\}}] \\
\non& =\EE[ \EE[\|M_n\|^r1_{\{\tau>n\}}1_{\{S\leq n\}}1_{\{S\leq T\}}|\SC{F}_{S}]]\\
 & = \EE[[1_{\{S\leq n\}}1_{\{S\leq T\}} \EE[\|M_n\|^r1_{\{\tau>n\}}|\SC{F}_{S}]]
\end{align}
By the optional sampling theorem, Lemma \ref{burkapp} gives
$$\EE[\|M_n -M_{S\wedge n}\|^p|\SC{F}_{S\wedge n}] \leq  c_p\nu (n-S\wedge n)^{p/2}.$$
Therefore, on the event $\{S\leq n\}$,
$$\EE[\|M_n -M_{S}\|^p|\SC{F}_{S}] \leq  c_p\nu n^{p/2}.$$
Now, for a non-negative random variable $Z$ 
\begin{align}
\label{exp_formula}
\EE[Z^r1_{\{Z\geq u\}}|\SC{G}] = u^r\PP(Z\geq u|\SC{G}) + \int_u^\infty ry^{r-1}\PP(Z\geq y|\SC{G}) \drv y,
\end{align}
where $\SC{G}$ is a sub $\sigma$-algebra of $\SC{F}$ and $\Omega' \in \SC{F}$. Hence
\begin{align*}
\EE[\|M_n\|^r1_{\{\tau>n\}}|\SC{F}_{S}]& = \EE[\|M_n\|^r1_{\{\|M_n\| \geq n\}}|\SC{F}_{S}] \\
& =  n^{r}\PP(\{\|M_n\| \geq n\}|\SC{F}_{S}) + \int_{ n}^\infty ry^{r-1}\PP(\{\|M_n\| \geq y\}|\SC{F}_{S}) \ \drv y\\
& \leq  n^{r}\PP(\{\|M_n  - M_{S}\| \geq n/3\}|\SC{F}_{S}) \\
& \hspace{.5cm}+ \int_{ n}^\infty ry^{r-1}\PP(\{\|M_n- M_{S}\| \geq (y-2 n/3)\}|\SC{F}_{S}) \ \drv y, \mbox { on } \{S\leq T\}\\
& \leq 3^r n^{r-p} \EE[\|M_n  - M_{S}\|^p|\SC{F}_{S}] \\
&\hspace{.5cm}+ \int_{ n}^\infty ry^{r-1}\EE[\|M_n  - M_{S}\|^p|\SC{F}_{S}](y-3n/4)^{-p}\drv y, \mbox{ on } \{S\leq T\}\\
&\leq \theta_1  n^{r-p}\EE[\|M_n  - M_{S}\|^p|\SC{F}_{S}], \mbox{ on } \{S\leq T\}\\
&\leq c_p\nu\theta_1  n^{r-p/2}, \mbox{ on  }\{S\leq T\}\cap \{S\leq n\}.
\end{align*}
Thus \eqref{SleqT} implies that
\begin{align}
\label{Snleqn}
\non\EE[\|M_n\|^r1_{\{\tau>n\}}1_{\{S\leq T\}}]& \leq c_p\nu\theta_1  n^{r-p/2}\EE[1_{\{S\leq n\}}1_{\{S\leq T\}} ] \\
&\leq c_p\nu\theta_1  n^{r-p/2}\PP(S\leq n).
\end{align}
Notice that
\begin{align*}
\PP(S\leq n) &\leq \PP(\|M_{S\wedge n}\|\geq  n/3) \leq 3^p\EE\|M_{S\wedge n}\|^p/ n^p\\
&\leq 3^p\EE\|M_{n}\|^p/ n^p,\quad \mbox{ since } \{\|M_n\|^p\} \mbox{ is a submartingale}\\
& \leq 3^p\frac{\theta_0 n^{p/2}}{ n^p}, \quad \mbox{by } \eqref{EMnbound}
\end{align*}
Plugging the bound for $\PP(S\leq n)$ in \eqref{Snleqn} we are done.
\end{proof}

\begin{lemma}
\label{bdS2}
Let $\{M_n\}$ and $\tau$ be as in Lemma \ref{martbd0} and assume that for some  $p>2$, there exists a constant $\nu$ such that
$$\EE[\|M_{n+1}-M_{n}\|^p|\SC{F}_n] \leq \nu\quad \text{for all } n \geq 0.$$
Let $S_k$ and $T_k$ be defined as in Lemma \ref{bdS}. Then there exists a constant $\theta''$ such that
$$ \EE[\|M_n\|^r1_{\{\tau>n\}}1_{\{S_n> T_n\}}] \leq\frac{\theta''}{ n^{p-r}}.$$
\end{lemma}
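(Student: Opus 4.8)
The plan is to split the event $\{\tau>n\}\cap\{S_n>T_n\}$ according to the index $T_n$ at which the first ``large'' jump (of size at least $n/3$) occurs, and then to sum the resulting bounds. Put $\Delta_j=M_{j+1}-M_j$ and $A_j=\{T_n=j\}\cap\{S_n>j\}$; these events are pairwise disjoint and $\{S_n>T_n\}=\bigsqcup_{j\ge0}A_j$. On $\{\tau>n\}\cap A_j$ one has, for $j\ge1$, $j\le\norm{M_j}<n/3$ — the left inequality from $\{\tau>n\}$ and the right one from $\{S_n>j\}$ — so only the indices $0\le j<n/3$ contribute. Moreover $\{\tau>n\}\cap A_j\subset\widetilde A_j\cap\{\norm{M_n}\ge n\}$, where $\widetilde A_j=\widetilde B_j\cap\{\norm{\Delta_j}\ge n/3\}$ and
\[
\widetilde B_j=\{\norm{M_i}\ge i,\ 1\le i\le j\}\cap\{\norm{M_i}<n/3,\ 0\le i\le j\}\cap\{\norm{\Delta_i}<n/3,\ 0\le i<j\}\in\SC F_j.
\]
The crucial feature is that the constraint $\{\norm{M_i}\ge i,\ i\le j\}$ — the part of $\{\tau>n\}$ that pins the walk above the diagonal \emph{before} the large jump — is kept inside $\widetilde B_j$; this is what will make the sum over $j$ converge.

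For each fixed $j<n/3$ I would estimate $\EE\bigl[\norm{M_n}^r\indic{\{\norm{M_n}\ge n\}}\indic{\widetilde A_j}\bigr]$, which dominates $\EE[\norm{M_n}^r\indic{\{\tau>n\}}\indic{A_j}]$. Conditioning on $\SC F_{j+1}$ — with respect to which $\widetilde A_j$ is measurable, since $\widetilde B_j\in\SC F_j$ and $\Delta_j\in\SC F_{j+1}$ — the tail-integral identity \eqref{exp_formula} together with the conditional Markov inequality gives $\EE\bigl[\norm{M_n}^r\indic{\{\norm{M_n}\ge n\}}\,\big|\,\SC F_{j+1}\bigr]\le\tfrac{p}{p-r}\,n^{r-p}\,\EE\bigl[\norm{M_n}^p\,\big|\,\SC F_{j+1}\bigr]$. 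Lemma \ref{burkapp} applied with $k=j+1$ and $\nu_i\equiv\nu$ yields $\EE[\norm{M_n-M_{j+1}}^p\,|\,\SC F_{j+1}]\le c_p\nu\,(n-j-1)^{p/2}\le c_p\nu\,n^{p/2}$, so $\EE[\norm{M_n}^p\,|\,\SC F_{j+1}]\le2^p\norm{M_{j+1}}^p+2^pc_p\nu\,n^{p/2}$; and on $\widetilde B_j$ one has $\norm{M_{j+1}}\le\norm{M_j}+\norm{\Delta_j}<n/3+\norm{\Delta_j}$. Taking expectations, and using that $\widetilde B_j\in\SC F_j$ while $\PP(\norm{\Delta_j}\ge n/3\,|\,\SC F_j)\le\nu(3/n)^p$ and $\EE[\norm{\Delta_j}^p\indic{\{\norm{\Delta_j}\ge n/3\}}\,|\,\SC F_j]\le\nu$, all the terms collapse to
\[
\EE\bigl[\norm{M_n}^r\indic{\{\norm{M_n}\ge n\}}\indic{\widetilde A_j}\bigr]\le C\,n^{r-p}\,\PP(\widetilde B_j)
\]
for a constant $C=C(p,r,\nu)$.

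It then remains to sum over $j$. Since $\widetilde B_j\subset\{\norm{M_j}\ge j\}$ and \eqref{EMnbound} gives $\EE[\norm{M_j}^p]\le\theta_0\,j^{p/2}$, Markov's inequality yields $\PP(\widetilde B_j)\le\theta_0\,j^{-p/2}$ for $j\ge1$, while $\PP(\widetilde B_0)\le1$. Because $p>2$, the series $\sum_{j\ge1}j^{-p/2}$ converges, so
\[
\EE\bigl[\norm{M_n}^r\indic{\{\tau>n\}}\indic{\{S_n>T_n\}}\bigr]=\sum_{0\le j<n/3}\EE\bigl[\norm{M_n}^r\indic{\{\tau>n\}}\indic{A_j}\bigr]\le C\,n^{r-p}\Bigl(1+\theta_0\sum_{j\ge1}j^{-p/2}\Bigr)=\frac{\theta''}{n^{p-r}}.
\]

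The delicate point — and the reason the exponent $p-r$ is sharp — is precisely this summation. A naive decomposition, retaining only $\{\norm{M_i}<n/3\}$ before the large jump and $\{\norm{M_n}\ge n\}$ afterward, produces for each of the roughly $n/3$ admissible indices a term of order $n^{r-p}$, whose sum is then of order $n^{r-p+1}$, a full power of $n$ too large. Keeping the ``above-the-diagonal'' history inside $\widetilde B_j$, so that $\PP(\widetilde B_j)$ is summable in $j$, is exactly what recovers that factor of $n$, and is where the hypothesis $p>2$ enters. Equivalently, one may avoid the explicit sum by conditioning once on $\SC F_{(T_n\wedge n)+1}$ and invoking the optional sampling theorem, in the style of the proof of Lemma \ref{bdS}; the bookkeeping is identical.
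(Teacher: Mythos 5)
Your proof is correct and follows essentially the same route as the paper's: decompose over the index $j=T_n$ of the first large jump, retain the ``above-the-diagonal'' constraint so that $\PP(\|M_j\|\ge j)\le\theta_0 j^{-p/2}$ (via \eqref{EMnbound}) makes the sum over $j$ converge, and use Lemma \ref{burkapp} together with the conditional Markov bound $\PP(\|\Delta_j\|\ge n/3\mid\SC F_j)\le\nu(3/n)^p$ to extract the factor $n^{r-p}$. The only cosmetic difference is that you bound $\EE[\|M_n\|^r\indic{\{\|M_n\|\ge n\}}\mid\SC F_{j+1}]$ in one stroke via the tail formula \eqref{exp_formula}, whereas the paper first splits $\|M_n\|^r\le 3^r(\|M_k\|^r+\|\Delta_k\|^r+\|M_n-M_{k+1}\|^r)$ and treats the three pieces separately; both computations yield the same constants up to inessential factors.
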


\begin{proof}
As before, we denote $S_n\equiv S$ and $T_n\equiv T$. Again since $\{\tau>n\} \subset \{S\leq n\}$,
\begin{align*}
\EE[\|M_n\|^r1_{\{\tau>n\}}1_{\{T<S\}}]& =\sum_{k=0}^{n-1}\EE[\|M_n\|^r1_{\{\tau>n\}}1_{\{S>k\}}1_{\{T=k\}}]\\
&\leq 3^r\sum_{k=0}^{n-1}\EE[(\|M_k\|^r+\|M_{k+1}-M_k\|^r\\
& \quad +\|M_n - M_{k+1}\|^r)1_{\{\tau>n\}}1_{\{S>k\}}1_{\{T=k\}}]\\
&\leq 3^r\sum_{k=0}^{n-1}(I + II + III).
\end{align*}

First notice that since on $\{S>k\}$, $\|M_k\| \leq  n/3$ and $\tau>n$ implies $\tau>k$, for $k\leq n$, we have
\begin{align*}
I &\leq \EE[1_{\{S>k\}}1_{\{\tau>k\}}(\frac{n}{3})^r\PP(T = k|\SC{F}_k)]\\
& \leq  \EE[1_{\{S>k\}}1_{\{\tau>k\}}(\frac{n}{3})^r\PP(\|M_{k+1}-M_k\| \geq  \frac{n}{3}|\SC{F}_k)].
\end{align*}
Notice that 
\begin{equation}
\label{probbound}\PP(\|M_{k+1}-M_k\| \geq  n/3 |\SC{F}_k) \leq 3^p n^{-p}E[\|M_{k+1}-M_k\|^p|\SC{F}_k] \leq 3^p n^{-p}\nu.
\end{equation}
It follows that $I \leq 3^{p-r} n^{r-p} \nu\PP(\tau>k).$

Next, observe that
\begin{align*}
II& \leq \EE[1_{\{\tau>k\}}\EE[\|M_{k+1} - M_k\|^r1_{\{\|M_{k+1}-M_k\|> n/3\}}|\SC{F}_k]].
\end{align*}
Then from \eqref{exp_formula} we have
\begin{align*}
\EE[\|M_{k+1} - M_k\|^r1_{\{\|M_{k+1}-M_k\|> n/3\}}|\SC{F}_k]& \leq \left(\f{ n}{3}\right)^r \PP(\|M_{k+1}-M_k\| \geq \f{ n}{3}|\SC{F}_k) \\
& \hspace{.4cm}+ \int_{\f{ n}{3}}^\infty ry^{r-1}\PP(\|M_{k+1}-M_k\| \geq y|\SC{F}_k) \ \drv y\\
& \leq \EE[\|M_{k+1}-M_k\|^p|\SC{F}_k] (3^{p-r} n^{r-p}\\
&\hspace{.4cm}+\int_{ n/3}^\infty ry^{r-1-p}\drv y)\\
&\leq \theta_2 n^{r-p}, \mbox{ for some } \theta_2.
\end{align*}
Hence, $II \leq \theta_2 n^{r-p} \PP(\tau>k)$.

Finally, 
\begin{align}
\label{III_cond}
III&\leq \EE[1_{\{\tau>k\}}1_{\{T=k\}}\EE[\|M_n-M_{k+1}\|^r|\SC{F}_{k+1}],
\end{align}
as by the definition $\{T=k\}$ is $\SC{F}_{k+1}$-measurable.
Notice that by  Lemma \ref{burkapp}  $\EE[\|M_n-M_{k+1}\|^p|\SC{F}_{k+1}] \leq  c_p\nu (n-k-1)^{p/2}$, for $k<n$.
Now since $r<p$, it follows that 
\begin{align*}
\EE[\|M_n-M_{k+1}\|^r|\SC{F}_{k+1}]& \leq \EE[\|M_n-M_{k+1}\|^p|\SC{F}_{k+1}]^{r/p}  \\
&\leq (c_p\nu)^{r/p} (n-k-1)^{r/2}, \mbox{ for } k<n\\
& \leq (c_p\nu)^{r/p} n^{r/2}, \mbox{ for } k<n.
\end{align*}
Putting this in \eqref{III_cond} we have for $k<n$
\begin{align*}
III&\leq (c_p\nu)^{r/p} n^{r/2}\EE[1_{\{\tau>k\}}1_{\{T=k\}}]\\
& = (c_p\nu)^{r/p}n^{r/2}\EE[1_{\{\tau>k\}}\PP(T=k|\SC{F}_k)]\\
&\leq (c_p\nu)^{r/p} n^{r/2}\EE[1_{\{\tau>k\}}\PP(\|M_{k+1} -M_k\| \geq  n/3|\SC{F}_k)]\\
& \leq (c_p\nu)^{r/p}3^p n^{r/2-p} \PP(\tau>k), \mbox{ by \eqref{probbound}}.
\end{align*}
Now for $k\geq 1$
\begin{align*}
\PP(\tau>k) \leq \PP(\|M_k\| \geq k)& \leq \EE[\|M_k\|^p]k^{-p} \\
&\leq \theta_0  k^{-p/2},\quad \mbox{by }\eqref{EMnbound}.
\end{align*}

Since $p>2$, it follows that for some $\theta_3$,
\begin{align*}
\EE[\|M_n\|^r1_{\{\tau>n\}}1_{\{T<S\}}]& \leq \theta_3 n^{r-p} (1+\sum_{k=1}^{n-1}\frac{\theta_0}{k^{p/2}})\\
&\leq  \theta_3 (1+\theta_0\zeta(p/2))n^{r-p},
\end{align*}
where $\zeta$ denotes the Riemann-zeta function.

\end{proof}

	\section*{Acknowledgments}
		It is a pleasure to thank Prof. Tom Kurtz and Federico Ramponi for helpful discussions.

	\bibliographystyle{elsarticle-num}
	\bibliography{ref}

\end{document}